\newcommand{\pair}[2]{\left\langle #1 , #2\right\rangle}
\DeclareMathOperator{\Sym}{Sym}
\DeclareMathOperator{\coker}{coker}
\def\into{\mathrel{\hookrightarrow}}
\def\onto{\mathrel{\twoheadrightarrow}}
\newcommand{\sets}[2]{\left\{#1\,\middle|\,#2\right\}}
\newcommand{\genrel}[2]{\left\langle #1\,\middle|\,#2\right\rangle}
\newcommand{\rquotient}{\backslash}
\DeclareMathOperator{\End}{End}
\DeclareMathOperator{\Mat}{Mat}
\newcommand{\id}{\mathrm{id}}
\DeclareMathOperator{\Ind}{Ind}
\newcommand{\Rep}{\mathbf{Rep}}
\newcommand{\Mod}{\mathbf{Mod}}
\newcommand{\Coh}{\mathrm{Coh}}
\newcommand{\QCoh}{\mathrm{QCoh}}
\DeclareMathOperator{\Spec}{Spec}
\DeclareMathOperator{\relSpec}{\underline{Spec}}
\newcommand{\Pp}{\mathbb{P}}
\newcommand{\Bb}{\mathcal{B}}
\newcommand{\Fl}{\mathcal{F}\ell}
\newcommand{\pt}{\mathrm{pt}}
\newcommand{\HH}{\mathbf{H}}
\newcommand{\Ff}{\mathcal{F}}
\newcommand{\Gg}{\mathcal{G}}
\DeclareMathOperator{\supp}{supp}
\newcommand{\N}{\mathbb{N}}
\newcommand{\Z}{\mathbb{Z}}
\newcommand{\C}{\mathbb{C}}
\newcommand{\F}{\mathbb{F}}
\newcommand{\Oo}{\mathcal{O}}
\newcommand{\Ad}{\mathrm{Ad}}
\newcommand{\g}{\mathfrak{g}}
\newcommand{\bB}{\mathfrak{b}}
\newcommand{\nN}{\mathfrak{n}}
\newcommand{\G}{\mathbf{G}}
\newcommand{\Haff}{H_{\mathrm{aff}}}
\newcommand{\cc}{\mathbf{c}}
\newcommand{\PGL}{\mathrm{PGL}}
\newcommand{\SL}{\mathrm{SL}}
\newcommand{\Gm}{{\mathbb{G}_\mathrm{m}}}
\newcommand{\Sn}{\mathfrak{S}}
\newcommand{\St}{\mathrm{St}}
\newcommand{\triv}{\mathrm{triv}}
\newcommand{\bq}{\mathbf{q}}
\newtheorem{theorem}{Theorem}
\newtheorem{prop}{Proposition}
\newtheorem{lem}{Lemma}
\theoremstyle{definition}
\newtheorem{dfn}{Definition}
\theoremstyle{remark}
\newtheorem{ex}{Example}
\newtheorem{rem}{Remark}
\newcommand{\Waff}{\tilde{W}}
\newcommand{\CHaff}{\mathcal{H}_\mathrm{aff}}
\newcommand{\E}{\mathcal{E}}
\newcommand{\Perv}{\mathrm{Perv}}
\newcommand{\DGCoh}{\mathrm{DGCoh}}
\newcommand{\J}{\mathcal{J}_0}
\newcommand{\JA}{\mathcal{J}_{0\mathcal{A}}}
\newcommand{\tee}{\mathcal{t}}
\newcommand{\heart}{\heartsuit}
\newcommand{\sS}{\mathrm{SingSupp}}
\newcommand{\Sing}{\mathrm{Sing}}
\newcommand{\Nn}{\tilde{\mathcal{N}}}
\newcommand{\BbG}{\Bb/G}
\newcommand{\ptG}{\pt/G}
\newcommand{\ptGGm}{\pt/G\times\Gm}
\newcommand{\BbGGm}{\Bb/G\times\Gm}
\newcommand{\GGm}{G\times\Gm}
\newcommand{\StGGm}{\mathrm{St}/G\times\Gm}
\DeclareMathOperator{\KD}{\mathrm{KD}}
\providecommand{\keywords}[1]{\textbf{\textit{Keywords---}} #1}
\title{A coherent categorification of the based ring of the lowest two-sided cell}
\date{\today}
\author{Stefan Dawydiak \thanks{Mathematical Institute, Universit\"{a}t Bonn, Bonn 53111 Germany; email \texttt{dawydiak@math.uni-bonn.de}}}
\begin{document}
\maketitle
\begin{abstract}
We give a partial coherent categorification  of $J_0$, the based ring of the lowest two sided cell of an affine 
Weyl group, equipped with a monoidal functor from the category of coherent sheaves on the 
derived Steinberg variety. We show that our categorification acts on natural coherent categorifications
of the Iwahori invariants of the Schwartz space of the basic affine space.
In low rank cases, we construct complexes that lift the basis elements $t_w$ of $J_0$ and their structure
constants.
\end{abstract}
\keywords{Asymptotic Hecke algebra, Iwahori-Hecke algebra, flag variety, Steinberg variety}
\section{Introduction}
Let $\Waff$ be an affine Weyl group. Its group algebra $\C[\Waff]$ is deformed by the affine Hecke algebra 
$\Haff=H(\Waff)$ of $\Waff$. In turn,
Lusztig defined the asymptotic Hecke algebra $J$, a based ring with basis $t_w$, $w\in\Waff$
and structure constants determined from certain ``leading terms" of the structure constants of 
$\Haff$. Further, he provided a morphism of algebras $\phi\colon\Haff\into J\otimes_\Z\Z[\bq^{\pm 1/2}]$
and showed it was an algebra after a mild completion. Thus $\Haff$ can be viewed as a subalgebra of $J$,
and $J$ can be viewed as a subalgebra of a completion $\CHaff$ of $\Haff$. While the morphism $\phi$ is an 
essential part of Lusztig's exploration of $J$, until recently there have been few compelling reasons to adopt
the perspective of $J\otimes_\Z\Z[\bq^{\pm 1/2}]$ as a subalgebra of $\CHaff$.

The algebra $\Haff$ appears in 
many areas of mathematics in many guises, but one of the most prominent relates to the representation theory
of $p$-adic groups. Let $F$ be a local non-archimedean field and $q$ be the cardinality of the residue field of $F$. Let $G^\vee$ be a connected reductive algebraic group defined and split over $F$, with Langlands dual group $G$ taken over $\C$. For the purposes of harmonic analysis on $G^\vee(F)$, it is natural to consider $\Haff$, very much an algebraic object, as a subalgebra of the larger, analytically-characterized Harish-Chandra Schwartz algebra $\mathcal{C}(G^\vee(F))^I$. 

In \cite{BK}, Braverman and Kazhdan gave an interpretation of $J$ in terms of harmonic analysis, casting $J$
as an algebraic version of $\mathcal{C}(G^\vee(F))^I$ (they also defined a ring $\mathcal{J}$ doing the same for 
the full algebra $\mathcal{C}(G^\vee(F))$) by defining a map $J\to\mathcal{C}(G^\vee)^I$. In \cite{D2},
the author showed that this morphism was essentially the specialization of $\phi^{-1}$ for $\bq=q$, and in particular was injective. In \cite{BKK}, Bezrukavnikov-Karpov-Krylov showed that this map was an isomorphism;
the author did so via another approach for all but finitely-many cells of exceptional groups in \cite{Rigid}.

Lusztig gave a categorification of $J$ in \cite{LusCat} in terms of perverse sheaves on the affine flag
variety. In the spirit of the definition of $J$ as a ring, the underlying category is again $\Perv(\Fl)$,
but with monoidal structure given by \emph{truncated convolution} as opposed to convolution.
In this paper we provide a categorification of a large direct summand $J_0$ of $J$ that is compatible
with the perspective of \cite{BK}. Namely, we obtain a natural categorification of the action of $J_0$
on the unitary principal series, and produce a completely new category whose $K$-theory is $J_0$, as opposed
a new monoidal structure.

The main result of the present paper is
%
\begin{theorem}
Let $\Bb$ be the derived zero section of the Springer resolution $\tilde{\mathcal{N}}\to\mathcal{N}$. Then
\begin{enumerate}
\item 
the category
\[
\J:=D^b\Coh(\BbG\times_{\ptG}\BbG)_{!}
\]
is a triangulated subcategory of $\Coh(\BbG\times_{\ptG}\BbG)$,
has a monoidal structure given by convolution, and admits a natural monoidal functor
\[
\Coh(\StGGm)\to \Coh(\BbGGm\times_{\ptGGm}\BbGGm)_{!}
\]
such that 
\item
the induced morphism
\[
\Haff\to K_0(\J)\to K_{0}(\Bb^\heart/G\times\Gm\times_{\ptGGm}\Bb^\heart/G\times\Gm)
\]
is conjugate to $\phi_0$;
\item
In the special case when $G$ has universal cover equal to $\SL_2$ or $\SL_3$, there exists a family of objects 
$\{\tee_w\}_{w\in\mathbf{c}_0}$ in $\J$, such that, if $t_wt_x=\sum_{z}\gamma_{w,x,z^{-1}}t_z$ in $J_0$, then
\[
\tee_w\star\tee_x=\bigoplus_z\tee_z^{\oplus\gamma_{w,x,z^{-1}}}
\]
in $\J$ and the image in $ K_{0}(\Bb^\heart/G\times_{\ptG}\Bb^\heart/G)$ of the class $[\tee_w]$ under the 
above morphism is $[t_w]$.
\item
The category $\J$ acts on $\Coh(\Bb/T)$ and on $\Coh(\BbG\times_{\ptG}\BbG)$;
\end{enumerate}
\end{theorem}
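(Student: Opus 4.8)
The plan is to construct the two actions as convolution functors, using the monoidal structure from part (1) together with natural geometric correspondences. For the action on $\Coh_G(\Bb\times\Bb)$ this is essentially formal: $\J = D^b\Coh_G(\Bb\times\Bb)_!$ is a monoidal category, so it acts on itself by left convolution $\Ff \star (-)$, and one checks this descends to (or restricts from) the appropriate subcategory, with associativity and unit constraints inherited from the monoidal structure established in (1). The only thing to verify is that the completed/renormalized ``$!$'' conditions are preserved under convolution with a fixed object on the left, which follows from the same support and finiteness estimates used to prove that $\star$ is well-defined on $\J$ in part (1).

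For the action on $\Coh_T(\Bb)$, the idea is to realize $\Coh_T(\Bb)$ as a category of equivariant sheaves on which a ``half'' of the $\Bb\times\Bb$ geometry acts. First I would recall that $\Coh_T(\Bb) \simeq \Coh_G(\Bb \times \tilde{\mathcal{N}}/\!/\text{(something)})$ or, more directly, use the identification of $T$-equivariant sheaves on $\Bb$ with $G$-equivariant sheaves on $\Bb \times_{\pt} (G/T)$, together with the fact that $G/T$ fibers over a point with $\Bb$-type geometry; concretely, $\Bb$ is the derived zero section in $\tilde{\mathcal{N}} = G \times^B \nN$, and $G \times^B \pt = G/B$ carries the relevant $T$-action. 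The action functor is then $\Ff \star_{\Bb} M$ defined by pulling $\Ff \in \J$ and $M \in \Coh_T(\Bb)$ back to $\Bb \times \Bb \times (\text{base})$, tensoring over $\Oo_{\Bb\times\Bb}$, and pushing forward along the projection that forgets the middle copy of $\Bb$. Compatibility of this with $\star$ on $\J$ — i.e. $(\Ff \star \Gg)\star_\Bb M \simeq \Ff\star_\Bb(\Gg\star_\Bb M)$ — reduces to the projection formula and base change along the fiber square relating the two convolution diagrams, exactly as in the proof that (1) gives a monoidal structure.

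The main obstacle I anticipate is \emph{not} the formal module-category axioms but rather checking that the relevant derived pushforwards stay within the correct ``$!$''-renormalized subcategory: the zero section $\Bb$ is non-proper in the derived sense (its structure sheaf has unbounded Tor-amplitude as a sheaf on $\tilde{\mathcal{N}}$), so the pushforwards defining $\star$ and $\star_\Bb$ a priori land in a larger completed category, and one must invoke the singular-support or coherence bounds (the same ones that make $\J$ closed under $\star$) to land back in $D^b\Coh$. Concretely I would: (i) set up the two correspondence diagrams; (ii) define the functors via pull-tensor-push; (iii) prove well-definedness by the support/finiteness argument of part (1); (iv) establish associativity and unitality by base change and the projection formula; and (v) note that on $K$-theory these recover the module structures over $J_0$ used implicitly in part (2), giving a consistency check. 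Step (iii) is where the real content lies, and it is precisely where one reuses, rather than reproves, the analysis underlying part (1).
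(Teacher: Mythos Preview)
Your core insight is correct and matches the paper's proof exactly: the paper records part~(4) as a porism of Proposition~\ref{proposition J0 monoidal and ider pullback is monoidal}, observing that the coherence argument there was already written for $\Ff\in\J$ and \emph{arbitrary} $\Gg\in\Coh(\Bb\times\Bb)$, so nothing new is required to obtain the action on the larger category. Two small corrections to your framing. First, you repeatedly phrase the obstacle as preserving the ``$!$'' condition, but the module categories $\Coh_G(\Bb\times\Bb)$ and $\Coh_T(\Bb)$ carry no singular-support restriction---what must be verified is bare coherence of the convolution, and the point is precisely that the singular-support intersection estimate in part~(1) used only the proper-projection hypothesis on the \emph{left} factor $\Ff$ (the condition on $\Gg$ was invoked only later, to show the result lands back in $\J$). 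Second, your setup for $\Coh_T(\Bb)$ via $G\times^T(-)$ identifications is unnecessary: one simply restricts $G$-equivariance on $\Ff$ to $T$-equivariance and reruns the identical intersection argument with $\Oo_\Bb\boxtimes M$ playing the role of $\Oo_\Bb\boxtimes\Gg_{23}$; this is what the paper means by ``entirely similar''.
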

%
%

\begin{proof}
The four propositions below each prove one statement of the theorem.
\end{proof}
The algebra $J$ is very close to being a direct 
sum of matrix algebras. In type $A$, this is the main result of Xi's monograph \cite{XiAMemoir}, and Bezrukavnikov-Ostrik in \cite{BO} showed this up to central extensions in general. (It is however now 
known \cite{BDD} \cite{QX} that this partial result is sharp; the central extensions do in fact appear in general.)

Therefore the last two items are particularly relevant: $J$ is most interesting as a based algebra admitting a 
morphism from $\Haff$, and can be quite simple in isolation. Our categorification
captures the failure of $\phi_0$ to be surjective, as explained in Remark \ref{rem singsupp reflects phi0 not surjective} and the discussion preceding it. We hope to remove the very restrictive current hypothesis on
item 3 in a future version of this paper; see remark \ref{rem pairing failure} for an explanation of why 
it is currently necessary.

We would be remiss to not point out that, while the algebra $J_0$ is a quotient of $K_0(\J)$ by Proposition 
\ref{prop K-theory morphisms conjugate}, the two are in fact not equal. 
After completing this paper, we learned of \cite{Propp}, which succeeds in categorifying each summand of $J$, in 
particular $J_0$, in a way that also recovers $\phi_0$. The decategorification procedure of \textit{op. cit.}
is more sophisticated than simply taking Grothendieck groups.
\subsection{Acknowledgements}
The author thanks Alexander Braverman for introducing him to the notion of of singular support and its consequences for convolution, Dylan Butson and Kostya Tolmachov for many patient and helpful conversations about the 
basics of derived algebraic geometry and the coherent Hecke category, and Andy Ramirez-Cot\'{e} for a helpful 
discussion. The author thanks Roman Bezrukavnikov for pointing out an overreach in an earlier version of this
paper. This research was supported by NSERC.

\section{Functions and algebras}
In this section we will recall the various algebras whose categorifications we will discuss in Section
\ref{section sheaves and categories}. There is no new material in this section, although we could not find
a recollection of all the relationships below in one place in the existing literature.
\subsection{The affine Hecke algebra}
Let $G$ be a connected simply-connected reductive group defined over $\C$ with Borel subgroup $B$, maximal torus $T\subset B$,
and classical flag variety $\Bb^\heart=G/B$. Let $X^*$ be the character lattice of $T$, and 
$\Waff=W\ltimes X^*$, where $W$ is the finite Weyl group of $G$. Let $H=\Haff$ be the corresponding affine Hecke 
algebra over $\mathcal{A}=\Z[\bq^{1/2},\bq^{-1/2}]$ with standard basis $\{T_w\}_{w\in\Waff}$. The 
multiplication in this, the Coxeter presentation, of $H$
is determined by $T_wT_{w'}=T_{ww'}$ when $\ell(ww')=\ell(w)+\ell(w')$ and the quadratic relation
$(T_s+1)(T_s-\bq)=0$ for all $s\in S$, where $S\subset\Waff$ is the set of simple reflections. The 
Coxeter presentation is well-suited to studying the action of $H$ on admissible representations and 
the constructible categorification of $H$.

There is a second presentation of $H$, due to Bernstein (and Bernstein-Zelevinskii in type $A$), which
appears naturally in coherent descriptions of $H$, both on the level of $K$-theory and the level of categories.
\begin{dfn}
The \emph{Bernstein presentation} of $H$ is the presentation with basis $\{T_w\theta_\lambda\}_{w\in W,\lambda\in X_*}$, where
\begin{itemize}
\item
For $w\in W$, $T_{w}$ is the same basis element as in the Coxeter presentation.
\item
If $\lambda\in\tilde{W}$ is an antidominant character, and hence corresponds to a \emph{geometrically} dominant 
cocharacter in the sense of \cite{CG}, then 
\[
\theta_{\lambda}=\bq^{-\frac{\ell(\lambda)}{2}}T_{\lambda}.
\]
\item
If $\lambda\in\tilde{W}$ is a dominant character, and hence corresponds to a
\emph{geometrically} antidominant cocharacter, then
\[
\theta_{\lambda}=\bq^{\frac{\ell(\lambda)}{2}}T_{\lambda}^{-1}.
\]
\end{itemize}
The sets $\{\theta_\lambda,\theta_\lambda T_{s_0}\}_{\lambda\in X_*}$ and $\{\theta_\lambda, T_s\theta_\lambda\}_{\lambda\in X_*}$ are each $\mathcal{A}$-bases. The relations are as follows:
\begin{itemize}
\item
The same quadratic relation for $T_{s_0}$;
\item
For any cocharacters $\lambda,\lambda'$, we have
$\theta_\lambda\theta_{\lambda'}=\theta_{\lambda+\lambda'}$.
\item
The Bernstein relation
\[
\theta_{\frac{\alpha}{2}}T_{s_\alpha^\vee}-T_{s_\alpha^\vee}\theta_{-\frac{\alpha}{2}}=(q-1)\frac{\theta_{\frac{\alpha}{2}}-\theta_{-\frac{\alpha}{2}}}{1-\theta_{-\alpha}}
=
(q-1)\theta_{\frac{\alpha}{2}}.
\]
\end{itemize}
\end{dfn}
\begin{ex}
Let $G^\vee=\PGL_2$, so that $\Waff=\Sn_2\ltimes X$, where $X$ is the character lattice of $G=\SL_2$. Write $s_0$ for the finite simple reflection, and $s_1$ for the affine simple reflection in $\Waff$. Then 
the generators of the Bernstein subalgebra are as follows:
\begin{enumerate}
\item 
If $\lambda=-n\alpha^\vee=(s_1s_0)^n\in\tilde{W}$,  and hence corresponds to a \emph{geometrically} dominant 
cocharacter in the sense of \cite{CG}, then 
\[
\theta_{(s_1s_0)^n}=\theta_{-n\alpha^\vee}= \bq^{-n}T_{(s_1s_0)^n}.
\]
\item
If $\lambda=n\alpha^\vee=(s_0s_1)^n\in\tilde{W}$, and hence corresponds to a
\emph{geometrically} antidominant cocharacter, then
\[
\theta_{(s_0s_1)^n}=\theta_{n\alpha^\vee}=\bq^{n}T_{(s_1s_0)^n}^{-1}.
\]
\end{enumerate}
In particular, under the geometric choice of dominance, we have $\rho=-1$. 
\end{ex}
\subsection{The asymptotic Hecke algebra}
\begin{dfn}
\emph{Lusztig's a-function} $a\colon\tilde{W}\to\N$ is defined such that $a(w)$ is the minimal value such that $q^{\frac{a(w)}{2}}h_{x,y,w}\in\mathcal{A}^+$ for all $x,y\in\tilde{W}$.
\end{dfn}
It is known that $a$ is constant on two-sided cells of $\tilde{W}$ and that 
\[
a(\cc)=\dim\Bb_u
\]
where $u$ is the unipotent conjugacy class in $G$ corresponding to $\cc$ under Lusztig's bijection.
It is also known that $a(w)\leq\ell(w)$ for all 
$w\in\tilde{W}$. 

In \cite{affineII} Lusztig defined an associative algebra
$J$ over $\Z$ equipped with an injection $\phi\colon H\into J\otimes_\Z\mathcal{A}$
which becomes an isomorphism after taking a certain completion
of both sides. As an abelian group, $J$ has a basis $\{t_w\}
_{w\in W}$. Recalling the Kazhdan-Lusztig basis elements
\[
C_w=\sum_{y\leq w}(-1)^{\ell(w)-\ell(y)}\bq^{\frac{\ell(w)}{2}-\ell(y)}P_{y,w}(\bq^{-1})T_y,
\]
the structure constants of $J$ are obtained from those in $H$ written in the $\{C_w\}_{w\in W}$-basis under the 
following procedure. Using the structure constants
\[
C_xC_y=\sum_{z\in W}h_{x,y,z}C_z
\]
for $h_{x,y,z}\in\mathcal{A}$, Lusztig then defines the integer $\gamma_{x,y,z}$ by the condition
\[
q^{\frac{a(z)}{2}}h_{x,y,z^{-1}}-\gamma_{x,y,z}\in q\mathcal{A}^+.
\]
The product in $J$ is then defined as
\[
t_xt_y=\sum_z\gamma_{x,y,z}t_{z^{-1}}.
\]
One then defines
\[
\phi(C_w)=\sum_{\substack{z\in W,~d\in\mathcal{D} \\ a(z)=a(d)}}h_{x,d,z}t_z,
\]
where $\mathcal{D}\subset\Waff$ is the set of distinguished involutions.
The elements $t_d$ for distinguished involutions $d$ are orthogonal idempotents. Moreover, 
$J=\bigoplus_{\mathbf{c}}J_{\mathbf{c}}$ is a direct sum of two-sided ideals indexed by
two-sided cells $\mathbf{c}\subset\Waff$. The unit element in each summand is 
$\sum_{\mathcal{D}\cap\mathbf{c}}t_d$, and the unit element of $J$ is $\sum_{d\in\mathcal{D}}t_d$.
\subsubsection{The lowest two-sided cell}
\label{subsection the lowest two-sided cell}
Let $\cc_0$ be the lowest two-sided cell, also called the ``big cell." It can be characterized
as the two-sided cell containing the longest element of $W$, and we have $a(\cc_0)=\ell(w_0)$.
The summand $J_0:=J_{\mathbf{c}_0}$ is particularly well-understood,
and has historically been the first summand for which any structure-theoretic result has been achieved
(consider, for example, the progression \cite{XiI}, \cite{XiAMemoir}, \cite{BO}).
Write $\phi_0$ for the composition of $\phi$ composed by the projection $J\otimes_\Z\mathcal{A}\to J_0\otimes_{\Z}\mathcal{A}$.

By \cite{XiI} and \cite{Nie}, we have the following description of $\cc_0\subset\Waff$.
Let $\tilde{\cc_0}$ be the lowest cell of the affine Weyl group of the universal covering 
group $\tilde{G}$ of $G$ with maximal torus $\tilde{T}$. Then $\cc_0=\tilde{\cc}_0\cap\Waff$, and 
\[
\tilde{\cc_0}=\sets{f^{-1}w_0\chi g}{f,g\in\Sigma,\chi\in X^*(\tilde{T})^+},
\]
where $\Sigma=\sets{wx_w}{x\in W}\subset\Waff(\tilde{G})$, where 
\begin{equation}
\label{eq Steinberg basis element definition}
x_w=w^{-1}\left(\prod_{\substack{\alpha\in\Delta \\ w^{-1}(\alpha)<0}}\varpi_\alpha \right)\in X^*(\tilde{T}),
\end{equation}
where $\varpi_\alpha$ is the fundamental dominant weight corresponding to $\alpha$. We have
\[
t_{f^{-1}w_0\lambda g}t_{(f')^{-1}w_0\nu g'}=0
\]
if $g\neq f'$, and 
\[
t_{f^{-1}w_0\lambda g}t_{g^{-1}w_0\nu g'}=\sum_{\mu}m_{\lambda,\nu}^\mu t_{f^{-1}w_0\mu g'},
\]
where $m_{\lambda,\nu}^\mu$ is the multiplicity of $V(\mu)$ in $V(\lambda)\otimes V(\nu)$.
\subsubsection{On a theorem of Steinberg}
Write $\pt=\Spec\C$.

Steinberg showed in \cite{St} that $K_{\tilde{T}}(\pt)$ is a free $K_{\tilde{G}}(\pt)$ module with basis
$\{x_w\}_{w\in W}$. 
Under the isomorphism $K_0(\pt/T)\simeq K_0(\Bb^\heart/\tilde{G})$, the $x_w$ define an 
$K_{0}(\pt/\tilde{G})$-basis $\{\Ff_w\}_w$ of the latter ring, where $\Ff_w=\Oo_{\Bb^\heart}(x_w)$, and \cite{KLDeligneLanglands} show that the natural pairing
\[
\pair{-}{-}\colon K_{0}(\Bb^\heart/\tilde{G})\otimes_{K_{0}(\pt/\tilde{G})}K_{0}(\Bb^\heart/\tilde{G})\to K_{0}(\pt/\tilde{G})
\]
is nondegenerate. While the dual basis is often employed in the literature starting from 
\textit{loc. cit.}, we are not aware of an explicit description of it. We provide one here in very low rank cases
in type $A$. The lack of a description in other cases of the dual basis elements as classes in $K$-theory of 
some natural objects of $\Coh(\Bb^\heart/\tilde{G})$ is the only obstruction to proving Proposition 
\ref{prop multiplication of sheaves tee_w} in greater generality.
\begin{lem}
\label{thm dual basis in K-theory}
Let $\tilde{G}=\SL_2$ or $\SL_3$.
The collection $\Gg_w=\Oo(y_w)[\ell(w)]$, where
\[
y_w= \left(w^{-1}\prod_{\substack{\alpha\in\Delta \\ w^{-1}(\alpha)>0}}\varpi_\alpha\right)\rho^{-1}
\]
defines the basis dual to Steinberg's basis of $K_{0}(\Bb^\heart/\tilde{G})$ under the above pairing. 
\end{lem}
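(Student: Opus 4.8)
The plan is to verify the duality relation $\pair{\Ff_w}{\Gg_{w'}} = \delta_{w,w'}$ directly, using the explicit formulas for $x_w$ and $y_w$ together with a workable model for the pairing on $K_{\tilde G}(\Bb^\heart)$. First I would identify the pairing concretely: since $\Bb^\heart$ is the (classical) zero section of the Springer resolution, i.e. the flag variety $\mathcal B$, the pairing $\pair{-}{-}$ is $(\Ee, \Ee') \mapsto \chi(\mathcal B, \Ee \otimes \Ee')$ pushed to a point $G$-equivariantly, which by localization or by the Weyl character formula can be computed in $K_{\tilde T}(\pt) = \Z[X^*(\tilde T)]$ after extending scalars. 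Concretely, writing $R = K_{\tilde G}(\pt)$ and $R_T = K_{\tilde T}(\pt)$, the class $\Oo(\nu)$ corresponds to $\sum_{w \in W} \frac{w(e^\nu)}{\prod_{\alpha > 0}(1 - w(e^{-\alpha}))}$ (a demazure-type or Weyl-integration expression), and the pairing $\pair{\Oo(\mu)}{\Oo(\nu)}$ becomes the $W$-invariant part of $e^{\mu+\nu}$ divided by the Weyl denominator — i.e. the ``Euler characteristic'' $\sum_{w}(-1)^{\ell(w)} \cdots$, which is exactly what computes $\dim \Hom$ or alternating multiplicities of weights in Demazure/Weyl modules.

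The key computation is then combinatorial. The exponent $x_w = w^{-1}\bigl(\prod_{\alpha \in \Delta,\, w^{-1}(\alpha) < 0} \varpi_\alpha\bigr)$ and the exponent $y_w = \bigl(w^{-1}\prod_{\alpha \in \Delta,\, w^{-1}(\alpha) > 0} \varpi_\alpha\bigr)\rho^{-1}$ are built from complementary subsets of the simple roots, and one checks that $x_w + y_{w'}$, after applying the relevant Weyl group element and accounting for the shift by $\rho^{-1}$ and the homological shift $[\ell(w')]$ (which contributes a sign $(-1)^{\ell(w')}$ in $K$-theory), lands in the ``forbidden'' region — a weight on a wall or strictly below $-\rho$ after dotted action — unless $w = w'$, in which case it equals $\rho^{-1}$ something whose Euler characteristic is exactly $1$. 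I would organize this as: (i) reduce to the rank-one computation along each simple coroot direction via the product structure of the $\varpi_\alpha$'s; (ii) in rank one, do the $\SL_2$ case by hand, where $x_e = 0$, $x_{s} = s^{-1}\varpi = -\varpi$, $y_e = \varpi\rho^{-1}$, $y_s = \rho^{-1}$, and verify the $2\times 2$ Gram matrix is the identity using $\chi(\Pp^1, \Oo(n)) = n+1$ and the shift; (iii) observe $\SL_3$ follows by a similar but slightly larger finite check, since there the relevant weights are small and the Bott-type vanishing is explicit.

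The main obstacle I anticipate is bookkeeping the dominance conventions and the homological shifts consistently: the excerpt flags that under the geometric choice of dominance $\rho = -1$ (in the $\SL_2$ example), so signs and the direction of inequalities are easy to get backwards, and the shift $[\ell(w)]$ interacts with the sign in the Euler-characteristic pairing in a way that must be tracked carefully to land on $\delta_{w,w'}$ rather than $(-1)^{\ell(w)}\delta_{w,w'}$. A secondary subtlety is justifying that $\pair{-}{-}$ really is computed by the naive Euler-characteristic formula on the classical flag variety $\Bb^\heart$ rather than requiring derived corrections — but since $\Bb^\heart$ is smooth and we are pairing line bundles (or their shifts), no derived subtlety arises and the formula $\pair{\Ee}{\Ee'} = \sum_i (-1)^i [H^i(\mathcal B, \Ee \otimes \Ee')]$ in $R = K_{\tilde G}(\pt)$ is literally correct. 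Once the rank-one case is pinned down with the correct conventions, the general low-rank verification is a short finite computation, and nondegeneracy of the pairing (already known by \cite{KLDeligneLanglands}) guarantees that exhibiting a basis pairing to $\delta_{w,w'}$ with the known basis $\{\Ff_w\}$ indeed identifies $\{\Gg_w\}$ as \emph{the} dual basis.
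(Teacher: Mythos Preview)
Your overall approach---compute the pairing as an equivariant Euler characteristic on $\Bb^\heart$ and then verify the Gram matrix entry by entry---is exactly what the paper does; its proof is the single sentence ``The lemma can be proved by direct computation, for example by computer.'' So steps (ii) and (iii) of your plan, the explicit $\SL_2$ check and the finite $\SL_3$ check, constitute a complete proof and match the paper.

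One caution: step (i), the proposed reduction to a rank-one computation ``along each simple coroot direction via the product structure of the $\varpi_\alpha$'s,'' cannot be made to work as a genuine simplification. The Euler characteristic on the full flag variety does not factor as a product over simple directions, and more to the point, any such reduction would prove the lemma for all $\SL_n$. The paper's Remark immediately following the lemma records that the classes $[\Gg_w]$ already fail to pair correctly with the Steinberg basis for $\SL_4$ (the defect being governed by singular Schubert varieties). So drop step (i) and proceed directly with the finite checks; your discussion of conventions, shifts, and the identification of the pairing is otherwise on target.
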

\begin{ex}
In type $A_1$ and additive notation, we have $x_1=0$ and $x_{s_\alpha}=s_\alpha(\varpi_\alpha)=1-2=-1$. In this case the Steinberg
basis is self-dual, with $y_1=\varpi_\alpha-\rho=1-1=0$ and $y_{s_\alpha}=s_\alpha(0)-\rho=-1$.
\end{ex}
The lemma can be proved by direct computation.
\begin{rem}
\label{rem pairing failure}
The classes $[\Gg_w]$ cease to pair correctly with the Steinberg basis classes starting for $G=\SL_4$, in 
a way apparently governed by singularities of Schubert cells. For example, for $\SL_4$, one has
\[
\pair{[\Ff_w]}{[\Gg_1]}=\triv_{\SL_4}
\]
where $w=1$, or when $w=\sigma$ is the product of the two permutations in $\Sn_4$ that index singular Schubert varieties. In this case, the element dual to $[\Ff_1]$ is $[\Gg_1]+[\Gg_\sigma]$. We hope to produce natural
complexes in a future version of this paper that will lift these sums and pair correctly.
\end{rem}

\section{Sheaves and categories}
\label{section sheaves and categories}
All categories, functors, and schemes are derived unless indicated otherwise. We emphasize 
especially that all fibre products are derived (although frequently this consideration will have no effect).
Sections \ref{subsection derived schemes} and \ref{subsection Cat of Haff, Bez equiv} recall the necessary
material to define the category $\J$, and contain no new material.
\subsection{Derived stacks}
\label{subsection derived schemes}
Unless otherwise indicated, to ``apply base-change" means to apply Proposition 2.2.2 (b) 
of \cite{GRVol1}.

If $X$ is classical, then $\Coh(X)$ and $\Coh(X/G)$ are the usual bounded derived categories.
We write $\Rep(G):=\Coh(\ptG)$. We will often use silently the fact that if $f\colon X\to Y$ 
is a morphism of smooth locally-Noetherian schemes, then the pullback functor $f^*$ preserves coherence.
The classical schemes we work with will of course be exclusively locally-Noetherian, and the flag variety
and bundles over it are smooth.
\subsection{The scheme of singularities and singular support}
Given a coherent sheaf $\Ff$ on a stack $X$, Arinkin and Gaitsgory in \cite{AG} define
a classical stack $\sS(\Ff)$, the singular support of $\Ff$. The singular support serves in particular
to measure the extent to which an object of $\Coh(X)$ fails to lie in $\mathrm{Perf}(X)$. 
We will require only very special cases of the theory of singular support.

Let $X$ first be a quasi-smooth derived scheme. The classical scheme $\Sing(X)$ measures how far from being 
smooth $X$ is. Let $T^*(X)$ be the cotangent complex of $X$ and $T(X)$ its dual. Then one defines
\[
\Sing(X):=\relSpec\left(\Sym_{\Oo_{X^\heart}}H^1(T(X))\right)\to X^\heart. 
\]
The scheme of singularities is affine over $X^\heart$, but is not in general a vector bundle.
The singular support will be a conical subset of $\Sing(X)$. In general, if a morphism
\[
f\colon X\to Y
\]
exhibits $f^{-1}(\pt)$ as quasi-smooth, then given $x\in X$, 
\[
\Sing(X)_x=\coker(df_x)^*.
\]
Note that if $f\colon V\to W$ is a linear map between vector spaces, then the dg-algebra of functions on
the derived scheme $f^{-1}({0})$ is 
\[
\Oo_{\ker f}\otimes\Sym\left(\coker(f)^*[1]\right).
\]
The case of quotient stacks is analogous; one must only keep track of equivariance. See \cite[Sections 8,9]{AG}.
%
%
%
%
%
As derived schemes or stacks will appear below with approximately the same frequency as their truncations, there are no  notational savings to be had by adopting either the convention that all schemes are derived unless otherwise indicated, or the opposite convention. To match our convention about functors, we declare that in any case where a derived stack and its classical truncation appears, the derived stack will be without decoration, as will all classical stacks that appear without any derived enhancement.
\subsection{Categorification of $\Haff$, Bezrukavnikov's equivalence}
\label{subsection Cat of Haff, Bez equiv}
Let $\Nn=T^*(\Bb^\heart)$, and denote the Steinberg variety by 
$\St=\Nn\times^L_{\g}\Nn$. It is naturally a global complete intersection derived scheme, fitting into the pullback diagram
\begin{center}
\begin{tikzcd}
\St\arrow[d]\arrow[r]&\Nn\times\Nn\arrow[d, "f\circ i"]\\
\pt\arrow[r]&\g
\end{tikzcd}
\end{center}
where the vertical morphism is induced by the composite
\[
\Nn\times\Nn\to\g\oplus\g\to\g
\]
sending $(x,\bB, y,\bB')\mapsto x-y$. Therefore $\Sing(\St)$ is defined. Its fibre over a point 
$(X,\bB_1,\bB_2)$ of $\St^\heart$ is computed in \cite[Lemma 3.3.5]{ChenDhillon} to be
\begin{equation}
\label{eqn ChenDhillon Sing(St) fibres}
\Sing(\St)_{(X,\bB_1,\bB_2)}=\sets{Y\in\g}{Y\in \bB_1\cap\bB_2,~\kappa(Y,[X,-])=0}\subseteq\left(\g/(\nN_1\oplus\nN_2)\right)^*=\bB_1\cap\bB_2,
\end{equation}
where $\kappa$ is the Killing form.

The category $\Coh(\St/G\times\Gm)$ is monoidal under convolution of sheaves.
It categorifies $\HH$, and its unmixed $\Coh(\St/G)$ version is one side of Bezrukavnikov's celebrated equivalence \cite{tworel} upgrading the $K$-theoretic results we recall in the sequel. 
\begin{theorem}[Bezrukavnikov, \cite{tworel}]
Let $G^\vee$ be split over $F=\overline{\F_q}$ and dual to $G$. Let $I^\vee\subset\mathcal{L}G^\vee$ be an Iwahori subgroup of the loop group $\mathcal{L}G^\vee$. Then there is an equivalence of categories
\[
D_{I^\vee I^\vee}:=D^b(I^\vee\rquotient LG^\vee/I^\vee)\to \Coh(\St/G).
\]
The equivalence intertwines the automorphism of the left hand side induced by the Frobenius automorphism with 
pullback by the automorphism of $\St/G$ induced by $(X,\bB_1,\bB_2)\mapsto (qX, \bB_1,\bB_2)$.
\end{theorem}

\subsection{Categorification of $J_0$}
\subsubsection{Derived enhancement of the flag variety}
\label{subsubsection derived enhancement of the flag variety}
Let $|\E|$ be the total space of the quotient
\[
0\to\Nn\to \Bb^\heart\times\g\to|\E|\to 0
\]
and define
\begin{equation}
\label{eqn B defined by short exact sequence of vector bundles}
\Bb=\relSpec(\Sym_{\Oo_{\Bb^\heart}}\E[1])=\relSpec\left(\Sym_{\Oo_{\Bb^\heart}}\left(\Bb^\heart\times\g/\Nn\right)^*[1]\right).
\end{equation}
Then $\Bb$ is naturally a derived scheme with classical truncation $\Bb^\heart$ equipped with a morphism $i\colon\Bb\to\Nn$, and hence with a morphism
\[
i_{\mathrm{der}}\colon\Bb\times\Bb\to\St.
\]
By construction, we have a pullback diagram
\begin{equation}
\label{diagram B defined by fibre product}
\begin{tikzcd}
\Bb\arrow[d]\arrow[r]&\Nn\arrow[d]\\
\{0\}\arrow[r]&\Bb^\heart\times\g,
\end{tikzcd}
\end{equation}
where $\Bb^\heart\times\g\to\Bb^\heart$ is the trivial bundle with fibre $\g$ and $\{0\}$ is its zero-section. 
Therefore $\Bb$ is a quasi-smooth DG-scheme in the sense of \cite{AG}.
The description of $\Bb$ as a fibre product yields a similar description of $\Bb\times\Bb$. Indeed, the diagram
\begin{equation}
\label{diagram B B defined is fibre product}
\begin{tikzcd}
\Nn\arrow[r]\arrow[d, hook]&\g\arrow[d, "\id"]&\Nn\arrow[l]\arrow[d, hook]\\
\Bb^\heart\times\g\arrow[r]&\g&\Bb^\heart\times\g\arrow[l]\\
\{0\}\arrow[r]\arrow[u]&\pt\arrow[u]&\{0\}\arrow[l]\arrow[u]
\end{tikzcd}
\end{equation}
gives immediately the description
\begin{equation}
\label{diagram fibre product description of B x B}
\begin{tikzcd}
\Bb\times\Bb\arrow[r, "i_{\mathrm{der}}"]\arrow[d]&\St\arrow[d, "p_{\St}"]\\
\{0\}\arrow[r, "i_{\{0\}}"]&\Bb^\heart\times\Bb^\heart\times\g.
\end{tikzcd}
\end{equation}
where $\{0\}$ now means the zero-section of the trivial bundle $\Bb^\heart\times\Bb^\heart\times\g$.
We get the same description for the quotients by $G$ or $\GGm$. Note that \eqref{diagram fibre product description of B x B} says that $i_{\mathrm{der}}$ is a quasi-smooth closed immersion by 
\cite[Prop. 2.1.10]{AG}.

The category $\Coh(\BbG\times_{\ptG}\BbG)$ is a module category over the monoidal category $\Coh(\ptG)$,
via
\[
V\cdot\Ff=\pi^*V\otimes_{\Oo_{\Bb\times\Bb}}\Ff,
\]
for $V\in\Coh(\ptG)$, where $\pi\colon\BbG\times_{\ptG}\BbG\to\ptG$. The same procedure makes $\J$ into a
module category over $\Coh(\ptG)$. 
\begin{lem}
\label{lem Sing(ider) lands in diagonal}
If $\Ff\in\Coh_{G\times\Gm}(\St)$, then
\[
\sS(i^*_{\mathrm{der}}\Ff)\subseteq\Delta\tilde{\g}\subset\tilde{\g}\times\tilde{\g}.
\]
\end{lem}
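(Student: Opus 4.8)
The plan is to compute the singular support of $i_{\mathrm{der}}^*\Ff$ by pulling back along the Cartesian square \eqref{diagram fibre product description of B x B} and invoking the compatibility of singular support with pullback along quasi-smooth morphisms from \cite{AG}. First I would recall that for $\Ff\in\Coh_{G\times\Gm}(\St)$ the singular support $\sS(\Ff)$ is a conical subset of $\Sing(\St)$, which by the computation in Section \ref{subsection Cat of Haff, Bez equiv} has fibre $\left(\g/(\nN_1\oplus\nN_2)\right)^*$ at a point $(X,\bB_1,\bB_2)$. Then, since $i_{\mathrm{der}}\colon\Bb\times\Bb\to\St$ is the base change of the zero-section $i_{\{0\}}\colon\{0\}\to\Bb^\heart\times\Bb^\heart\times\g$ along $p_{\St}$, it is itself quasi-smooth, and there is an induced map on schemes of singularities $\Sing(i_{\mathrm{der}})\colon\Sing(\Bb\times\Bb)\to (\,\Bb\times\Bb\,)^\heart\times_{\St^\heart}\Sing(\St)$; the Arinkin--Gaitsgory estimate gives $\sS(i_{\mathrm{der}}^*\Ff)\subseteq \Sing(i_{\mathrm{der}})^{-1}\!\left(\,(\Bb\times\Bb)^\heart\times_{\St^\heart}\sS(\Ff)\,\right)$, so it suffices to understand the map $\Sing(i_{\mathrm{der}})$ at the level of fibres.

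Next I would compute both sides of this map fibrewise over a point of $(\Bb\times\Bb)^\heart=\Bb^\heart\times\Bb^\heart$, say $(\bB_1,\bB_2)$ (the point $X\in\g$ in the Steinberg variety is forced to be $0$ since $\Bb\times\Bb$ sits over $\{0\}$). On one hand, $\Sing(\St)_{(0,\bB_1,\bB_2)}=\left(\g/(\nN_1\oplus\nN_2)\right)^*$ as recalled above. On the other hand, using the description $\Bb\times\Bb=\{0\}\times_{\Bb^\heart\times\Bb^\heart\times\g}\St$ together with the general formula $\Sing(X)_x=\coker(df_x)^*$ for $X=f^{-1}(\pt)$, I would identify $\Sing(\Bb\times\Bb)_{(\bB_1,\bB_2)}$ as the cokernel-dual of the differential of $f\circ i$ on $\Bb\times\Bb$, which — because $\Bb$ is cut out inside $\Nn$ by the further equation landing in $\Bb^\heart\times\g$ — is the dual of $\g/(\nN_1\oplus\nN_2)$ sitting inside a larger space. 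The key point is that the composite $\Bb\times\Bb\to\St$ followed by the inclusion into the ambient linear data factors so that the extra singularity directions of $\Bb\times\Bb$ relative to $\St$ impose that the image of $\sS$ lands in the locus where the $\g/(\nN_1\oplus\nN_2)^*$ directions are cut down to the diagonal Cartan/Borel piece; concretely, pulling back the conical subset $\sS(\Ff)\subseteq\Sing(\St)$ through $\Sing(i_{\mathrm{der}})$ and then identifying $\Sing(\Bb\times\Bb)$ with (a conical subset of) $\tilde\g\times_{\g}\tilde\g$ via \eqref{diagram fibre product description of B x B}, the image is supported on $\Delta\tilde\g$. I would make this identification explicit using the diagram \eqref{diagram B B defined is fibre product}: the two copies of $\Nn\hookrightarrow\Bb^\heart\times\g$ are glued along the \emph{same} $\g$, and it is precisely this shared $\g$ (rather than two independent copies, as would be the case if $f$ were $(x,y)\mapsto(x,y)$ into $\g\oplus\g$) that collapses the second-order data onto the diagonal.

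The main obstacle I expect is the bookkeeping in the fibrewise identification of $\Sing(\Bb\times\Bb)$ with a subscheme of $\tilde\g\times_\g\tilde\g$ compatibly with the map from $\Sing(\St)$ — that is, checking that the cokernel computation for the iterated fibre product \eqref{diagram B B defined is fibre product} really does produce the diagonal embedding $\Delta\tilde\g\hookrightarrow\tilde\g\times\tilde\g$ and not some larger conical subset, and that the Arinkin--Gaitsgory pullback bound is applied to the correct quasi-smooth morphism (one must verify $i_{\mathrm{der}}$ has the property that its fibres are quasi-smooth, which follows from the Cartesian square \eqref{diagram fibre product description of B x B} since the bottom map $i_{\{0\}}$ is a zero-section, hence a regular embedding). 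Once the fibrewise picture is pinned down, the conicity of $\sS(i_{\mathrm{der}}^*\Ff)$ and the functoriality statement assemble the containment $\sS(i^*_{\mathrm{der}}\Ff)\subseteq\Delta\tilde\g$ immediately.
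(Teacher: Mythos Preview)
Your overall plan is the paper's: invoke the Arinkin--Gaitsgory estimate for singular support under pullback and compute the singular codifferential of $i_{\mathrm{der}}$ fibrewise. Two things need correcting, however.

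First, you have the direction of the singular codifferential reversed. For a morphism $f\colon X\to Y$ of quasi-smooth schemes, the differential $T(X)\to f^*T(Y)$ induces a map $H^1(T(X))\to f^*H^1(T(Y))$, and applying $\relSpec\Sym$ reverses the arrow: the singular codifferential $\Sing(f)$ goes from $X^\heart\times_{Y^\heart}\Sing(Y)$ \emph{to} $\Sing(X)$. The relevant estimate from \cite{AG} is then an \emph{image} bound,
\[
\sS(f^*\Ff)\subseteq\Sing(f)\bigl(X^\heart\times_{Y^\heart}\sS(\Ff)\bigr),
\]
not a preimage bound. With the direction you wrote, the preimage of all of $\Sing(\St)$ under a map out of $\Sing(\Bb\times\Bb)$ would be all of $\tilde\g\times\tilde\g$, and nothing would be proved.

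Second, the fibrewise computation is only gestured at, and one identification is off: $\Sing(\Bb\times\Bb)$ is $\tilde\g\times\tilde\g$, not a subset of $\tilde\g\times_\g\tilde\g$. The paper carries this out explicitly. Over $(\bB_1,\bB_2)\in\Bb^\heart\times\Bb^\heart$ one has
\[
\Sing(\Bb\times\Bb)_{(\bB_1,\bB_2)}=(\g/\nN_1)^*\oplus(\g/\nN_2)^*=\bB_1\oplus\bB_2,
\qquad
\Sing(\St)_{(0,\bB_1,\bB_2)}=\bigl(\g/(\nN_1+\nN_2)\bigr)^*,
\]
and the singular codifferential is the linear map dual to the sum of the two quotient maps
\[
\g/\nN_1\oplus\g/\nN_2\twoheadrightarrow\g/(\nN_1+\nN_2).
\]
The dual of this surjection is precisely the diagonal embedding $\xi\mapsto(\xi,\xi)$, so its image lies in $\Delta\tilde\g$. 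Your ``shared $\g$'' intuition is exactly this fact; you should record it as the linear-algebra statement above rather than as a narrative expectation.
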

\begin{proof}
We seek to apply Proposition 7.1.3 of \cite{AG} (c.f. Section 7.4.5 and Lemma 8.4.2 of \textit{op. cit.})  to 
the quasi-smooth closed embedding $i_{\mathrm{der}}$. We have
\begin{center}
\begin{tikzcd}
\Spec\Sym_{\Oo_{\Bb^\heart\times\Bb^\heart}}\left(i_{\mathrm{der}}^*T(\St)[1]\right)\arrow[rr,"\Sing(i_{\mathrm{der}})"]\arrow[dr]&&\Sing(\Bb\times\Bb)\arrow[dl]\\
&\St^\heart.
\end{tikzcd}
\end{center}
Fibrewise, the pairing condition of \eqref{eqn ChenDhillon Sing(St) fibres} becomes vacuous, and the singular codifferential is the linear map 
\begin{equation}
\label{eq Sing(ider) lands in diagonal}
\Sing(\St)_{(0,\bB_1,\bB_2)}=\left(\g/(\nN_1\oplus\nN_2)\right)^*=\bB_1\cap\bB_2\to(\g/\nN_1)^*\oplus(\g/\nN_2)^*=\bB_1\oplus\bB_2=\Sing(\Bb\times\Bb)_{(\bB_1,\bB_2)}
\end{equation}
induced by the direct sum of projections
\[
\g/\nN_1\oplus\g/\nN_2\onto\g/(\nN_1\oplus\nN_2).
\]
This implies that \eqref{eq Sing(ider) lands in diagonal} is the simply the diagonal embedding, and the lemma
follows.
\end{proof}

%
 

%
\subsubsection{Definition of the category $\J$}
We now define the category $\J$, the main point being the condition
we impose on the singular supports of its objects. 
In our case 
\[
\Sing(\Bb)=\relSpec\left(\Sym_{\Oo_{\Bb^\heart}}\E[2]\right)\to\Bb^\heart.
\]
Koszul duality gives an equivalence
\[
\mathrm{KD}\colon\Coh(\Bb)\to\Sym_{\Oo_{\Bb^\heart}}\mathcal{E}[2]-\Mod^{\mathrm{f.g.}},
\]
and following \cite{AG}, we set 
\[
\sS(\Ff)=\supp(\mathrm{KD}(\Ff))\subset |\E|.
\]
Thus for $\Ff=\Ff_1\boxtimes\Ff_2\in\Coh(\Bb\times\Bb)$, $\sS(\Ff)$ is just the usual support of some other 
sheaf on the total space of the bundle $\Sing(\Bb)\times\Sing(\Bb)$. We note that $\Sing(\Bb)$ is 
none other than the bundle $\tilde{\g}$. Indeed, the fibres of 
$\Bb$ are 
\[
\Spec\Sym_\C((\g/\nN)^*[1])=\Spec\Sym_\C(\bB[1])
\]
and Koszul duality identifies
\[
\Sym_\C(\bB[1])-\Mod\simeq\Sym_\C(\bB^*[2])-\Mod,
\]
and it makes sense to take the support of a module on the right-hand side on the scheme $\bB$, by defining the support to be the 
support of the cohomology over the classical ring $\Sym_\C\bB^*$.

We define $\J$ to be the full subcategory of $\Coh_{G}(\Bb\times\Bb)$ with objects $\Ff$
such that the projection
\[
\sS(\Ff)\to\Sing(\Bb)
\]
onto the first factor is a proper morphism. We write
\[
\J:=\Coh(\BbG\times_{\ptG}\BbG)_!
\]
and
\[
\JA:=\Coh(\BbGGm\times_{\ptGGm}\BbGGm)_!,
\]
where $\Gm$ acts trivially on $\Bb$.

There are two obvious ways that the projection onto the first factor
can be proper: either $\mathrm{KD}(\Ff)$ is of form $\Delta_*\Ff'$, where $\Delta$
is the diagonal, or $\mathrm{KD}(\Ff)=\Ff_1'\boxtimes\Ff_2'$ with $\supp(\Ff_2)$ contained in the zero section.
This latter case arises precisely from sheaves $\Ff_1\boxtimes\Ff_2\in\J$ such that $\Ff_2$ is perfect.
These are essentially the only examples that we will encounter. The image of $i_{\mathrm{der}}^*$
consists of sheaves of the first type (this is especially easy to see for those sheaves whose images in 
$K$-theory are contained in $Z(J_0)=\phi_0(Z(\Haff))$), and the sheaves $\tee_w$ 
that we define in Section \ref{subsubsection the sheaves tee_w} are all examples of the second kind.
\begin{rem}
\label{rem singsupp reflects phi0 not surjective}
This fact, together with the second statement of the main theorem, can be viewed as a categorification 
of the fact that $\phi_0$ is injective but not surjective.
\end{rem}
Consider the pairing operation defined by
\[
\pair{\Ff}{\Gg}= \pi_*(\Ff\otimes\Gg)
\]
where $\pi\colon\BbG\to\ptG$. In general, this operation does not define a functor
\[
\pair{-}{-}\colon\Coh(\BbG)\otimes\Coh(\BbG)\to\ptG,
\]
but will do so when it comes to convolution of objects of $\J$.
\begin{prop}
\label{proposition J0 monoidal and ider pullback is monoidal}
The category $\J$ is a monoidal category under convolution of sheaves, 
and the pullback $i_{\mathrm{der}}^*$ defines a monoidal functor
\[
i_{\mathrm{der}}^*\colon\Coh(\St/G\times\Gm)\to\JA
\]
such that 
\[
\sS(i_{\mathrm{der}}^*\Ff)\subset\Delta\tilde{\g}
\]
for all $\Ff$. The category $\J$ is a triangulated subcategory of $\Coh_G(\Bb\times\Bb)$.

Additionally,
\begin{enumerate}
\item 
If $\Ff_1\boxtimes\Gg$ and $\Gg'\boxtimes\Ff_2$ are in $\J$, then $\pair{\Gg}{\Gg'}\in\Coh(\ptG)$ and
\[
\Ff_1\boxtimes\Gg\star\Gg'\boxtimes\Ff_2=\pair{\Gg}{\Gg'}\Ff_1\boxtimes\Ff_2;
\]
\item
If $V_1,V_2\in\Coh(\ptG)$, then
\[
(V_1\cdot\Ff)\star(V_2\cdot\Gg)=(V_1\otimes_\C V_2)\cdot\Ff\star\Gg
\]
for $\Ff,\Gg\in\J$.
\end{enumerate}
\end{prop}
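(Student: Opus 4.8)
The plan is to verify the monoidal structure, the monoidality of $i_{\mathrm{der}}^*$, the singular support estimate, and the two auxiliary identities more or less in that order, reducing each to facts already assembled in the excerpt.

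\textbf{Monoidal structure on $\J$ via convolution.} First I would recall the convolution product on $\Coh_G(\Bb\times\Bb)$: for $\Ff,\Gg$, set $\Ff\star\Gg = p_{13*}(p_{12}^*\Ff \otimes p_{23}^*\Gg)$ over the triple product $\Bb\times\Bb\times\Bb$, all functors derived, with the middle $\Bb$ the one being integrated out. Since $\Bb^\heart$ is proper, $p_{13}$ is proper and this lands in coherent sheaves; associativity and the unit (the structure sheaf of the derived diagonal, suitably shifted) are formal once one knows base change (Proposition 2.2.2(b) of \cite{GRVol1}, as stipulated). The real content is that this product preserves the singular-support condition defining $\J$: one must show that if $\sS(\Ff)$ and $\sS(\Gg)$ each project properly to $\Sing(\Bb)$ on the first factor, then so does $\sS(\Ff\star\Gg)$. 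Here I would use the functoriality of singular support under pullback (\cite{AG} Prop.\ 7.1.3, already invoked for Lemma \ref{lem Sing(ider) lands in diagonal}) and under proper pushforward, combined with the explicit identification $\Sing(\Bb)\cong\tilde{\g}$ made just above; the codifferential computations are parallel to those in Section \ref{subsubsection derived enhancement of the flag variety}. I expect that checking the singular-support condition is preserved --- i.e.\ that the class of objects with ``first-factor-proper'' support is closed under convolution --- is the main obstacle, since it requires tracking the conical subsets through pullback along $p_{12}, p_{23}$ and pushforward along $p_{13}$, and the two ``obvious'' shapes of singular support (diagonal-type and perfect-second-factor-type) described after the definition of $\J$ are exactly what make the estimate go through; I would organize the argument around those two cases.

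\textbf{Monoidality of $i_{\mathrm{der}}^*$.} Next, since $i_{\mathrm{der}}\colon\Bb\times\Bb\to\St$ comes from a morphism of derived schemes and $\St$ is itself defined as a fibre product (diagram \eqref{diagram fibre product description of B x B}), convolution on $\Coh_{G\times\Gm}(\St)$ is compatible with convolution on $\Bb\times\Bb$ via base change along the Cartesian squares; concretely, $i_{\mathrm{der}}^*(\Ff\star\Gg)\cong i_{\mathrm{der}}^*\Ff\star i_{\mathrm{der}}^*\Gg$ follows from the projection formula and base change applied to the map of triple products $\Bb^{\times 3}\to\Nn^{\times 3}\to$ (the triple version of $\St$), using properness of the relevant projections. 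That the essential image satisfies $\sS(i_{\mathrm{der}}^*\Ff)\subset\Delta\tilde{\g}$ is exactly Lemma \ref{lem Sing(ider) lands in diagonal}, and $\Delta\tilde\g$ projects isomorphically (hence properly) to the first $\Sing(\Bb)=\tilde\g$, so the image indeed lies in $\JA$; I would just cite the lemma and note this properness. The $\Gm$-equivariance is carried along throughout since $\Gm$ acts trivially on $\Bb$.

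\textbf{The two auxiliary identities.} For item (1), I would compute $\Ff_1\boxtimes\Gg \star \Gg'\boxtimes\Ff_2$ directly from the definition of convolution: the pullbacks to $\Bb\times\Bb\times\Bb$ are $\Ff_1\boxtimes\Gg\boxtimes\Oo$ and $\Oo\boxtimes\Gg'\boxtimes\Ff_2$, their tensor product is $\Ff_1\boxtimes(\Gg\otimes\Gg')\boxtimes\Ff_2$, and $p_{13*}$ of this is $\Ff_1\boxtimes\Ff_2$ tensored with $\pi_*(\Gg\otimes\Gg')=\pair{\Gg}{\Gg'}\in\Coh_G(\pt)$, by the Künneth/base-change formula. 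The one thing requiring care is that $\pair{\Gg}{\Gg'}$ genuinely lands in $\Rep(G)=\Coh_G(\pt)$ rather than an unbounded complex: this is where the hypothesis that $\Ff_1\boxtimes\Gg$ and $\Gg'\boxtimes\Ff_2$ lie in $\J$ enters --- the properness of the first-factor projection of $\sS$ forces, in the box-product case, $\Gg$ or $\Gg'$ to be perfect along the singular directions, so $\Gg\otimes^L\Gg'$ has bounded coherent cohomology and $\pi_*$ preserves this by properness of $\Bb^\heart$. I would spell out precisely which of the four sheaves must be perfect, matching the discussion preceding Remark \ref{rem singsupp reflects phi0 not surjective}. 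For item (2), the $\Rep(G)$-module structure is $V\cdot\Ff=\pi^*V\otimes\Ff$; pulling back along $p_{12},p_{23}$ and using that $\pi$ factors through each projection, $p_{12}^*(\pi^*V_1\otimes\Ff)\otimes p_{23}^*(\pi^*V_2\otimes\Gg)=\pi_{123}^*(V_1\otimes_\C V_2)\otimes p_{12}^*\Ff\otimes p_{23}^*\Gg$, and applying $p_{13*}$ with the projection formula (the class $\pi_{123}^*(V_1\otimes V_2)$ being pulled back from the point, hence from the base of $p_{13}$) yields $(V_1\otimes_\C V_2)\cdot(\Ff\star\Gg)$. This is a routine projection-formula manipulation; I would present it in one or two lines.
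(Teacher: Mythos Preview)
Your overall strategy matches the paper's, but there is one genuine gap in the monoidal-structure step. You write that because $\Bb^\heart$ is proper, $p_{13}$ is proper and hence the convolution ``lands in coherent sheaves,'' with the ``real content'' being only preservation of the first-factor-proper condition. This is not right: on the quasi-smooth (but not smooth) derived scheme $\Bb\times\Bb\times\Bb$, the tensor product $p_{12}^*\Ff\otimes p_{23}^*\Gg$ of two coherent sheaves need not be coherent---it can have unbounded cohomology, as one sees already fibrewise from $\C\otimes^L_{\Sym(\bB[1])}\C$. Proper pushforward preserves coherence only once the input is coherent; this is exactly why the Remark following the proposition notes that $\Coh_G(\Bb\times\Bb)$ is \emph{not} monoidal. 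The paper therefore uses the $\J$-hypothesis on $\Ff$ already at the coherence step: one checks that $\sS(p_{12}^*\Ff\boxtimes\Oo_\Bb)\cap\sS(\Oo_\Bb\boxtimes p_{23}^*\Gg)\subset\{0\}\times\tilde\g\times\{0\}$, and then that first-factor properness of $\sS(\Ff)$ together with conicality forces this intersection into the zero section, whence \cite{AG}~Prop.~7.2.2(b) gives coherence of the tensor product. Only after this does one turn to the $\J$-preservation estimate.

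A secondary point: the paper does not organize the $\J$-preservation argument around the two ``obvious shapes'' of singular support you mention. It gives a uniform argument instead: define $Y_2=\{(x_1,x_3):(x_1,0)\in\sS(\Ff),~(0,x_3)\in\sS(\Gg)\}$, show $\sS(\Ff\star\Gg)\subset Y_2$ via \cite{AG}~Prop.~7.1.3(b) applied to $p_{13}$, and then check $Y_2\to\tilde\g$ is proper directly from the hypotheses on $\Gg$. Your case analysis would likely suffice for the objects that actually arise, but the paper's argument is cleaner and covers arbitrary objects of $\J$.

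The remainder of your outline---monoidality of $i_{\mathrm{der}}^*$ via base change on the triple fibre products (the paper makes the identification $\Bb\times\Bb\times_\St(\Nn\times_\g\Nn\times_\g\Nn)\simeq\Bb\times\Bb\times\Bb$ explicit and then runs exactly the base-change you describe), the citation of Lemma~\ref{lem Sing(ider) lands in diagonal} for the diagonal estimate, the box-product formula in item~(1) via the observation that $\Gg$ is forced to be perfect, and the projection-formula manipulation for item~(2)---is correct and matches the paper.
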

\begin{rem}
The category $\Coh(\BbG\times_{\ptG}\BbG)$ is not monoidal.
\end{rem}
\begin{proof}
Let 
\[
\Ff_1\to\Ff_2\to\Ff_3\to
\]
be a distinguished triangle in $\Coh(\BbG\times_{\ptG}\BbG)$ such that
$\Ff_1,\Ff_2\in\J$. Applying Koszul duality, we get
\[
\KD(\Ff_1)\to\KD(\Ff_2)\to\KD(\Ff_3)\to
\]
in $\Coh(\tilde{\g}/G\times_{\ptG}\tilde{\g}/G)$. Then
\[
\supp\KD(\Ff_3)\subset\supp\KD(\Ff_1)\cup\supp\KD(\Ff_2),
\]
and we see that projection $\supp\KD(\Ff_3)\to\tilde{\g}/G$ onto the first factor is proper.

Let $\Ff\in\J$ and let $\Gg\in\Coh(\BbG\times_{\ptG}\BbG)$. Then their convolution will be coherent if
\[
\left(\Ff_{12}\boxtimes\Oo_{\BbG}\right)\otimes\left(\Oo_{\BbG}\boxtimes\Gg_{23}\right),
\]
is coherent, where the subscripts $ij$ indicate which factors inside $\BbG\times_{\ptG}\BbG\times_{\ptG}\BbG$ a 
given sheaf sits on. Noting that $\sS(\Oo_\BbG)=\{0\}$, we have
\begin{equation}
\label{eq prop 1 SS product}
\sS(\Ff_{12}\boxtimes\Oo_{\BbG})\cap \sS(\Oo_{\BbG}\boxtimes\Gg_{23})\subset \{0\}/G\times_{\ptG} \tilde{\g}/G\times_{\ptG} \{0\}/G.
\end{equation}
We claim that this intersection is in fact contained in the zero section of 
$\tilde{\g}/G\times_{\ptG}\tilde{\g}/G\times_{\ptG}\tilde{\g}/G$. First,
projection from $\sS(\Ff_{12})\times\{0\}$ to the first coordinate is a proper morphism, and so the same 
is true for projection from the intersection; $\{0\}\times\sS(\Gg_{23})$ is closed. As $\sS(\Ff_{12})$
is a conical subset, it now follows that the intersection is contained in $\{0\}\times\{0\}\times\{0\}$.
The claim now follows from \cite{AG} Corollary 8.4.8. 

Now suppose that projection to the first factor from $\sS(\Gg_{23})$ is also proper. We have
\[
\Sing(p_{13})\colon\tilde{\g}/G\times_{\ptG}\tilde{\g}/G\times\Bb^\heart/G\to\tilde{\g}/G\times_{\ptG}\tilde{\g}/G\times_{\ptG}\tilde{\g}/G
\]
is the inclusion of the zero section into the second coordinate, and is the identity on the other coordinates.

Define
\[
Y_2=\sets{(x_1,x_3)}{(x_1,z)\in\sS(\Ff_{12}),~(z,x_3)\in\sS(\Gg_{23})~\text{for some}~z\in\{0\}}.
\]
Then 
\begin{align*}
&\Sing(p_{13})^{-1}\left(\sets{(x_1,x_2,x_3)}{(x_1,x_2)\in\sS(\Ff_{12}),~(x_2,x_3)\in\sS(\Gg_{23})}\right)
\\
&=
\sets{(x_1,z,x_3)}{(x_1,z)\in\sS(\Ff_{12}),~(x_2,z)\in\sS(\Gg_{23}),~z\in\{0\}}\\
&=
Y_2\times_{\BbG\times_{\ptG}\BbG}\BbG\times_{\ptG}\BbG\times_{\ptG}\BbG.
\end{align*}
It follows from \cite{AG}, Lemma 8.4.5 that $\sS(\Ff\star\Gg)\subset Y_2$. It therefore
suffices to show that the projection $Y_2\to\tilde{\g}/G$ is proper. Indeed, we have 
\[
p_1^{-1}(K)\subset K\times p_{2,\Gg}\left(p_{1,\Gg}^{-1}(\{0\})\right),
\]
for any $K$, where $p_{i,\Gg}$ is the projection $\sS(\Gg)\to\tilde{\g}/G$ onto the $i$-th factor.

Therefore $\J$ is a monoidal category. The same of course goes for $\JA$.

Now we show the first formula. It is easy to see that if $\Ff\boxtimes\Gg\in\J$, then $\sS(\Gg)$ must be contained
in the zero section, \textit{i.e.} that $\Gg$ must be perfect.
Then, $\pair{\Gg}{\Gg'}$ is coherent because $\Gg\otimes\Gg'$ is and the map to 
$\pt$ is proper. Its pullback to $\Bb\times\Bb$ is then perfect. The remainder of the formula is obtained
by carrying out the calculations in Lemma 5.2.28 of \cite{CG}. The required projection formula and 
base-change are provided by Lemma 3.2.4 and Proposition 2.2.2 (b) of \cite{GRVol1}, respectively.

We next claim that if $\Ff\in\DGCoh_{G\times\Gm}(\St)$, then $i^*_{\mathrm{der}}\Ff$ is coherent, and the 
projection
\[
p\colon\sS(i^*_{\mathrm{der}}\Ff)\subset\tilde{\g}/G\times_{\ptG}\tilde{\g}/G\to\tilde{\g}/G
\]
onto the first factor is proper. Coherence follows again from \eqref{diagram B B defined is fibre product}.
Indeed, we need only show that the pushforward of $i^*_{\mathrm{der}}\Ff$ to $\Bb^\heart\times\Bb^\heart$
is coherent, and base-change says that this equals $i_{\{0\}}^*p_{\St*}\Ff$. By hypothesis
$p_{\St*}\Ff$ is coherent, and hence by smoothness of $\Bb^\heart\times\Bb^\heart\times\g$ the pullback
is also coherent. We must check that $i_{\mathrm{der}}^*\Ff\in\J$. Indeed, this follows immediately from Lemma \ref{lem Sing(ider) lands in diagonal}, which says that $\sS(i_{\mathrm{der}}^*\Ff)\subset\Delta \tilde{\g}$.

We now check that $i_{\mathrm{der}}^*$ is monoidal. Diagrams \eqref{diagram B defined by fibre product} and
\begin{center}
\begin{tikzcd}
\Nn/\GGm\arrow[r]\arrow[d]&\g/\GGm\arrow[d]&\g/\GGm\arrow[l]\arrow[d]\\
\BbGGm^\heart\arrow[r]&\ptGGm&\g/\GGm\arrow[l]\\
\BbGGm^\heart\arrow[u]\arrow[r]&\ptGGm\arrow[u] &\ptGGm\arrow[u]\arrow[l]
\end{tikzcd}
\end{center}
imply that
\[
\Nn/\GGm\times_{\BbGGm^\heart\times_{\ptGGm}\g/\GGm}\BbGGm^\heart\simeq\Nn/\GGm\times_{\g/\GGm}\ptGGm\simeq\BbGGm.
\]
Thus

\begin{align*}
&\BbGGm\times_{\ptGGm}\BbGGm\times_{\St/\GGm}\Nn/\GGm\times_{\g/\GGm}\Nn/\GGm\times_{\g/\GGm}\Nn/\GGm
\\
&\simeq\BbGGm\times_{\ptGGm}\BbGGm\times_{\g/\GGm}\Nn/\GGm
\\
&\simeq\BbGGm\times_{\ptGGm}\BbGGm\times_{\ptGGm}\ptGGm\times_{\g/\GGm}\Nn
\\
&\simeq\BbGGm\times_{\ptGGm}\BbGGm\times_{\ptGGm}\BbGGm,
\end{align*}

and we can apply base-change to the pullback diagram
\begin{equation}
\label{diagram ider monoidal base change}
\begin{tikzcd}
\BbGGm\times_{\ptGGm}\BbGGm\times_{\ptGGm}\BbGGm
\arrow[r, "i\times i\times i"]\arrow[d, "\pi_{ij}"]
&\Nn/\GGm\times^L_{\g/\GGm}\Nn/\GGm\times^L_{\g/\GGm}\Nn/\GGm
\arrow[d, "p_{ij}"]\\
\BbGGm\times_{\ptGGm}\BbGGm\arrow[r, "i_{\mathrm{der}}"]&\St/\GGm
\end{tikzcd} 
\end{equation}
where $\pi_{ij}$ and $p_{ij}$ are the projections.
%
%

With diagram \eqref{diagram ider monoidal base change} in hand, the remainder is entirely formal. Indeed, according to the 
definition of convolution on $\St$, we compute as follows: Let $\Ff,\Gg\in\Coh(\St/\GGm)$.
Then 
\begin{align}
i_{\mathrm{der}}^*(\Ff\star\Gg)
&=
i_{\mathrm{der}}^*p_{13*}\left(p_{12}^*\Ff\otimes p_{23}^*\Gg\right)
\label{eq ider monoidal before bc}
\\
&\simeq
\pi_{13*}(i\times i\times i)^*\left(p_{12}^*\Ff\otimes p_{23}^*\Gg\right)
\label{eq ider monoidal after bc}
\\
&\simeq
\pi_{13*}\left(\pi_{12}^*i_{\mathrm{der}}^*\Ff\otimes\pi_{23}^*i_{\mathrm{der}}^*\Gg\right)
\label{eq ider monoidal factorization}
\\
&=
i_{\mathrm{der}}^*\Ff\star i_{\mathrm{der}}^*\Gg.
\nonumber
\end{align}
We used base-change 
for the diagram \eqref{diagram ider monoidal base change} with $ij=13$ between lines 
\eqref{eq ider monoidal before bc} and \eqref{eq ider monoidal after bc}, and just commutativity 
of \eqref{diagram ider monoidal base change} for $ij=12$ and $ij=23$ on line \eqref{eq ider monoidal factorization}.
\end{proof}
Recall that $a(\cc_0)=\dim\Bb$, any quasicoherent sheaf on $\Bb^\heart$ has cohomology only in degrees 
at most $a(\cc_0)$. On the level of $K$-theory, this reflects the influence of 
the $a$-function on the multiplication in $J$. 
%
%
\subsubsection{The sheaves $\tee_w$}
\label{subsubsection the sheaves tee_w}
Xi, in \cite{XiI} for $G$ simply-connected, and Nie in \cite{Nie} in general gave a description in $K$-theory
of the elements $t_w$ for $w\in\cc_0$. We recall this construction below in Section \ref{subsection the lowest two-sided cell}; here we follow it on the level of categories in the special case $G=\SL_2$ or $\SL_3$, where it can be carried out almost verbatim. As remarked above, we hope to move beyond these two special cases in 
future work.

Recalling the equivalence
\[
\Ind_B^G\colon\Coh(\pt/B)\to\Coh(\BbG^\heart),
\]
we define
\[
\Ff_w=\Ind_B^G\mathrm{Infl}_T^B(x_w),
\]
where $x_w\in\Coh_T(\pt)$ is as in \eqref{eq Steinberg basis element definition} and 
\[
\mathrm{Infl}_T^B\colon\Coh(\pt/T)\to\Coh(\pt/B)
\]
is inflation. Likewise, define
\[
\Gg_w=\Ind_B^G\mathrm{Infl}_T^B(y_w)
\] 
where $y_w$ is the dual basis from Lemma \ref{thm dual basis in K-theory}.

Now if $w=fw_0 g^{-1}$, define
\[
\tee_w=\Ff_f\boxtimes p^*\Gg_g,
\]
where $p\colon\Bb\to\Bb^\heart$,
and if $w=fw_0 \chi g^{-1}$, define
\[
\tee_w=V(\chi)\tee_{fw_0g^{-1}},
\]
where we view $\Ff_f$ as pushed forward under the inclusion of the zero section of $\Bb$. Clearly
$\tee_w\in\Coh_G(\Bb\times\Bb)$. Moreover, as $\Bb^\heart$ is smooth, $\Gg_g$ is perfect, and hence
$p^*\Gg_g$ is perfect. Therefore $\sS(\Gg)$ is contained in the zero section of $\Sing(\Bb)$, 
and $\tee_w\in\J$. 
%
%

By Proposition \ref{proposition J0 monoidal and ider pullback is monoidal} (or using Corollary 8.4.8 of \cite{AG} directly), $\pair{\Gg_g}{\Ff_f}$ is defined for all $g,f$ and takes values in $\Coh(\pt/G)$. In fact, it agrees with the pairing $\pair{-}{-}_\heart$ on the classical truncation given the by the same procedure:
\[
\pair{p^*\Gg_g}{\Ff_{f}}=\pi_*(p^*\Gg_g\otimes\Ff_{f})=\pi_*^\heart p_*(p^*\Gg_g\otimes\Ff_{f})
=\pi_*^\heart\left(\Gg_g\otimes p_*\Ff_{f}\right)=\pair{\Gg_g}{\Ff_{f}}_\heart
\]
where
\begin{center}
\begin{tikzcd}
\BbG\arrow[rr, "p"]\arrow[dr, "\pi"]&&\BbG^\heart\arrow[dl, "\pi^\heart"]\\
&\ptG.
\end{tikzcd}
\end{center}
Therefore by the Borel-Weil-Bott theorem, we have
\[
\Ff_f\boxtimes p^*\Gg_g\star\Ff_{f'}\boxtimes p^*\Gg_{g'}=
\begin{cases}
\Ff_f\boxtimes\Gg_{g'}&\text{if}~g=f'\\
0&\text{otherwise}
\end{cases}.
\]
Moreover, as $G$ is reductive, we have again by Proposition \ref{proposition J0 monoidal and ider pullback is monoidal} that
\[
(V(\lambda)\Ff_f\boxtimes p^*\Gg_g)\star (V(\nu)\Ff_g\boxtimes p^*\Gg_{g'})
=\left(V(\lambda)\otimes V(\nu)\right)\Ff_f\boxtimes p^*\Gg_{g'}=\bigoplus_{\mu}V(\mu)(\Ff_f\boxtimes p^*\Gg_g)^{\oplus m^\mu_{\lambda,\nu}},
\]
where $m^\mu_{\lambda,\nu}$ is the multiplicity of $V(\mu)$ in $V(\lambda)\otimes V(\nu)$.

\section{$K$-theory}
In this section we show that the functor $\i_{\text{der}}^*$ categorifies Lusztig's homomorphism $\phi_0$.
By $K$-theory we shall always mean simply the Grothendieck group. We write $R(G)=K_{G}(\pt)=K_0(\Rep(G))$.
\subsection{$K$-theory of classical schemes and Lusztig's homomorphism}
We first relate Lusztig's homomorphism to a construction in $K$-theory of classical schemes given
in \cite{CG}. There is no new material in this section; when $G$ is simply-connected the relationship
is given by \cite{XiIII}, and the analogous result in general follows from \cite{BO}.
In order to perform calculations, though, we must devote significant space to fixing conventions.

Recall that the $K_{G\times\Gm}(\St)\simeq H$ as $\mathcal{A}$-algebras. We will use the explicit isomorphism 
given by Chriss and Ginzburg in \cite{CG}, Theorem 7.2.5.

Recall that by \cite{BO}, $J_0\simeq K_{G}(\mathbf{Y}\times\mathbf{Y})$ for a centrally-extended set
$\mathbf{Y}$ of cardinality $\# W$. Moreover, by 5.5 (a) of \textit{loc. cit.}, the stabilizer
of every $y\in\mathbf{Y}$ is $G$. When $G$ is simply-connected, it has no nontrivial central
extensions, and hence in this case $J_0\simeq\Mat_{\# W}(R(G))$ is a matrix ring,
as first shown in \cite{XiI}. Combining these results, we obtain an injection 
$\varphi_1\colon J_0\into\Mat_{\# W}(R(\tilde{G}))$, where $\tilde{G}\onto G$ is a simply-connected.
Write $H(\Waff(\tilde{G}))$ for the affine Hecke algebra of $\tilde{G}$.

In parallel, when $G$ is simply-connected, the external tensor product gives an isomorphism 
$K_0(\BbG\times_{\ptG}\BbG)\simeq K(\BbG)\otimes_{R(G)}K_0(\BbG)\simeq\Mat_{\# W}(R(G))$, by 
\cite{CG}, Theorem 6.2.4. This theorem does not hold when $G$ is not simply-connected. Indeed,
the one-dimensional tempered representations of $H$ that are not discrete series give 
of $J_0$-modules for $G^\vee=\SL_2$. On the other hand, the external tensor product still gives an inclusion
\[
\psi_1\colon K_0(\BbG\times_{\ptG}\BbG)\into K_0(\Bb/\tilde{G}\times_{\pt/\tilde{G}}\Bb/\tilde{G})\simeq\Mat_{\# W}(R(\tilde{G})).
\]
(Note that $G$ and $\tilde{G}$ have canonically isomorphic flag varieties and Weyl groups.)
%

By \cite{Nie}, we have an isomorphism $\sigma\colon J_0\to K_0(\BbG^\heart\times_{\ptG}\BbG^\heart)$ regardless 
of whether $G$ is simply-connected or not. 
\begin{lem}[\cite{XiIII}]
\label{lem CG map and Lusztig map conjugate}
The following diagram of $\mathcal{A}$-algebras
\begin{center}
\begin{tikzcd}
K_0(\St/\GGm)\arrow[rr]&&K_0(\Nn/\GGm\times_{\ptGGm}\Nn/\GGm)
\arrow[d, "\bar{\iota}^*\circ\bar{p}_*", hook]\\
&&K_{0}(\BbGGm\times_{\ptGGm}\BbGGm)\arrow[d,"\psi_1", hook]\\
&&\Mat_{\# W\times \#W}(R(\tilde{G}\times\Gm))\\
\HH\arrow[uuu, "\sim"]\arrow[r, "\phi_0"]&J_0\otimes_\Z\mathcal{A}\arrow[r, "\varphi_1", hook]&\Mat_{\# W\times \#W}(R(\tilde{G}\times\Gm))\arrow[u, "\Ad(A)"]
\end{tikzcd}
\end{center}
commutes, where $A$ is the change-of-basis matrix from the the $Z(H(\tilde{W}(\tilde{G}))$-basis 
$\sets{\theta_{e_w}C}{w\in W}$
of $H(\tilde{W}(\tilde{G}))$ to the $Z(H(\tilde{W}(\tilde{G})))$-basis $\sets{C_{d_ww_0}}{w\in W}$, where $e_w$ and $d_w$ are as in \cite{XiIII}.
\end{lem}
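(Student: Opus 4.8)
The plan is to reduce everything to the simply-connected case and then to a computation that is already essentially in the literature. First I would observe that all four rings in the top row, and the bottom row, are modules over $\mathcal{A}$ (indeed over the center $Z(H(\tilde W(\tilde G)))$ after the identifications), and every map in sight is $\mathcal{A}$-linear; so it suffices to check commutativity after extending scalars and, more importantly, after composing with the injections $\psi_1$ and $\varphi_1$ into the matrix algebra $\Mat_{\#W\times\#W}(R(\tilde G\times\Gm))$. This is why the statement is phrased with those two injections present: once we land in the matrix algebra, commutativity is the equality of two explicit ring homomorphisms $H\to \Mat_{\#W}(R(\tilde G\times\Gm))$, which we may verify on the Bernstein generators $\theta_\lambda$ and on the finite Hecke generators $T_s$ (equivalently on the $C_w$, $w\in W$), since these generate $H$ as an $\mathcal{A}$-algebra.

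Next I would identify the two homomorphisms concretely. Going along the bottom, Lusztig's $\phi_0$ composed with $\varphi_1$ is, by the explicit description of $J_0$ recalled in Section \ref{subsection the lowest two-sided cell} (the structure constants $m^\mu_{\lambda,\nu}$ coming from tensor product multiplicities, and the idempotents $t_d$), the map sending a Bernstein generator to a matrix whose entries are characters of $\tilde G$; the change-of-basis matrix $A$ between the two bases $\{\theta_{e_w}C\}$ and $\{C_{d_w w_0}\}$ records the discrepancy between the ``coherent'' normalization of the cell basis and Lusztig's $t_w$-normalization. Going along the top, $K_{G\times\Gm}(\St)\simeq H$ is the Chriss–Ginzburg isomorphism of \cite{CG}, Theorem 7.2.5; the pushforward–pullback $\bar\iota^*\circ\bar p_*$ along the zero section is exactly the classical shadow of the functor $i_{\mathrm{der}}^*$ studied in Proposition \ref{proposition J0 monoidal and ider pullback is monoidal}, so by the projection-formula and base-change computations there (and in \cite{CG}, Lemma 5.2.28) its effect on $K$-theory is convolution against the structure sheaf of the zero section, which on the matrix level is again multiplication of character-valued matrices. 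Composing with $\psi_1$ then gives a second explicit homomorphism $H\to\Mat_{\#W}(R(\tilde G\times\Gm))$.

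With both maps made explicit, the identity to check is: for each generator, the two resulting matrices over $R(\tilde G\times\Gm)$ agree up to conjugation by $A$. On the Bernstein lattice $\theta_\lambda$ this is the statement that Steinberg's basis $\{x_w\}$ of $K_{\tilde T}(\pt)$ over $K_{\tilde G}(\pt)$ (Section on ``a theorem of Steinberg'') diagonalizes the $\theta_\lambda$-action compatibly on both sides — which is built into the Chriss–Ginzburg isomorphism and into Nie's isomorphism $\sigma$. On the finite generators $T_s$ (equivalently $C_w$, $w\in W$) the matching is the content of \cite{XiIII} when $\tilde G=G$ is simply-connected, and in general it follows by composing with the injections $J_0\into\Mat_{\#W}(R(\tilde G))$ and $K^G(\Bb\times\Bb)\into K^{\tilde G}(\Bb\times\Bb)$ and invoking the simply-connected case, since both injections are compatible with all structure maps and with $A$ (which only depends on $\tilde W(\tilde G)$). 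I expect the main obstacle to be bookkeeping rather than mathematics: pinning down the precise normalizations — the powers of $\bq^{1/2}$ in the Bernstein elements $\theta_\lambda$, the shifts $[\ell(w)]$ in the dual basis $\Gg_w$, and the exact form of the matrix $A$ in \cite{XiIII} — so that ``conjugate by $A$'' is literally correct and not merely correct up to a diagonal twist; once the conventions of \cite{CG}, \cite{XiIII}, \cite{Nie}, and \cite{BO} are aligned, commutativity on generators is a finite check.
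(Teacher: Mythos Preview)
The paper does not give its own proof of this lemma: it is stated with the attribution \cite{XiIII}, and the surrounding text says explicitly that ``there is no new material in this section; when $G$ is simply-connected the relationship is given by \cite{XiIII}, and the analogous result in general follows from \cite{BO}.'' Later, in the proof of Proposition~\ref{prop K-theory morphisms conjugate}, the commutativity of this bottom square is disposed of in one line as ``the combination of the main results of \cite{XiIII} and \cite{Nie}.'' So there is nothing in the paper to compare your argument against beyond the citations themselves.

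Your outline is consistent with what those citations contain: reduce to the simply-connected cover via the injections $\psi_1,\varphi_1$, then check on Bernstein and finite Hecke generators, invoking \cite{XiIII} for the latter. That is the shape of the argument in the literature. One caution: your sketch treats the verification on the Bernstein lattice as essentially automatic (``built into the Chriss--Ginzburg isomorphism and into Nie's isomorphism $\sigma$''), but the nontrivial content of \cite{XiIII} is precisely the compatibility of the two bases $\{\theta_{e_w}C\}$ and $\{C_{d_ww_0}\}$ and the resulting matrix $A$, which governs both the Bernstein and the finite generators simultaneously; this is not a separate check on two independent generating sets but a single change-of-basis computation. Your closing remark that the obstacle is bookkeeping rather than mathematics is accurate, and is exactly why the paper defers to the references rather than reproducing the computation.
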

%
%

\subsubsection{$K$-theory of derived schemes and Lusztig's homomorphism}

Koszul duality identifies $\Coh(\BbGGm)$ with $\Coh(\tilde{\g}^*[-2]/\GGm)$.

We now establish the relationship in $K$-theory between the monoidal functor $i_{\mathrm{der}}^*$
from Section \ref{subsubsection derived enhancement of the flag variety} and the morphism $\phi_0$. Write
\[
K_{0}(\St/\GGm):=K_0\left(\Coh(\St/\GGm)\right)
\]
and write $K(X^\heart):=K_0(\Coh(X^\heart))$ whenever $X^\heart$ is a classical scheme, and similarly for 
stacks.

If $X$ is a derived stack with classical truncation $X^\heart$, we may define a morphism
\[
K(X)\to K(X^\heart)
\]
by 
\begin{equation}
\label{eq devissage isom}
[\Ff]\mapsto\sum_{i}(-1)^i[\pi_i(\Ff)],
\end{equation}
where $\pi_i(F)$ is viewed as a $\pi_0(\Oo_X)$-module. 

Recalling that the derived structure on $X$ is to be thought of as ``higher nilpotents," this morphism is 
identical in spirit to identifying $K_0(\Coh(\Spec A))$ and 
$K_0(\Coh(\Spec A_{\mathrm{red}}))$, where $A$ is Noetherian and $A_{\mathrm{red}}$ is reduced. 
Indeed, the map \eqref{eq devissage isom} is also an isomorphism of abelian groups, and both isomorphisms are 
consequences of d\'{e}vissage; see e.g. \cite{ToenEMS}.
\begin{lem}
Pushforward by bundle projection $p\colon \St/\GGm\to\St/\GGm^\heart$ induces the map \eqref{eq devissage isom} on $K$-theory. This map is an isomorphism of rings.
\end{lem}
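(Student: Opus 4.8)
The plan is to identify the map $K_{G\times\Gm}(\St)\to K_{G\times\Gm}(\St^\heart)$ induced by $p_*$ with the dévissage map \eqref{eq devissage isom}, and then invoke the known dévissage isomorphism to conclude it is a group isomorphism; the ring structure is then transported along it. First I would observe that $\St=\Nn\times^L_\g\Nn$ is quasi-smooth with classical truncation $\St^\heart$, and that the bundle projection $p\colon\St\to\St^\heart$ is affine (indeed $\Oo_\St$ is a sheaf of dg-algebras over $\Oo_{\St^\heart}$ which, by the Koszul-type description $\Oo_{\ker f}\otimes\Sym(\coker(f)^*[1])$ from Section \ref{subsection derived schemes}, is a free finitely-generated graded-commutative algebra). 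Hence $p_*$ is exact on quasi-coherent sheaves and preserves coherence, so it descends to $K$-theory. To see that $p_*$ computes $\sum_i(-1)^i[\pi_i(-)]$, I would filter an object $\Ff\in\DGCoh_{G\times\Gm}(\St)$ by its Postnikov/cohomology tower: the associated graded pieces are the $\pi_i(\Ff)[i]$ regarded as $\pi_0(\Oo_\St)=\Oo_{\St^\heart}$-modules, and since $p_*$ of a sheaf pulled back from (equivalently, supported set-theoretically on) the classical truncation is just that sheaf itself, additivity of $[\,\cdot\,]$ in $K$-theory gives $[p_*\Ff]=\sum_i(-1)^i[\pi_i(\Ff)]$. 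This is exactly \eqref{eq devissage isom}.

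Next I would quote the fact, recorded in the paragraph preceding the lemma (with reference to \cite{ToenEMS}), that \eqref{eq devissage isom} is an isomorphism of abelian groups; this is the dévissage statement for the closed immersion $\St^\heart\into\St$, whose ideal sheaf is, loosely, the ``higher nilpotents'' $\Oo_\St^{>0}$, and is nilpotent in the appropriate derived sense because $\St$ is eventually coconnective (being quasi-smooth). All of this is $G\times\Gm$-equivariant since $p$ and the truncation are, so the same argument runs verbatim on equivariant $K$-theory. This gives the isomorphism of abelian groups.

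For the ring structure: convolution on $\DGCoh_{G\times\Gm}(\St)$ makes $K_{G\times\Gm}(\St)$ a ring, and I would equip $K_{G\times\Gm}(\St^\heart)=K_{G\times\Gm}(\Nn\times^{cl}\Nn)$ with the classical convolution product (the one of \cite{CG} making it $\cong H$). The claim is that $p_*$ is a ring map. This follows from base change and the projection formula: the bundle projections on the triple products $\St\times_{\Nn}\St$ vs. $\St^\heart\times_{\Nn^\heart}\St^\heart$ fit into a cube compatible with the convolution diagrams, exactly as in the base-change manipulations \eqref{diagram ider monoidal base change}–\eqref{eq ider monoidal factorization} already used in the proof of Proposition \ref{proposition J0 monoidal and ider pullback is monoidal}; pushing the convolution formula $p_{13*}(p_{12}^*\Ff\otimes p_{23}^*\Gg)$ forward along the relevant bundle projection and commuting $p_*$ past $p_{13*}$ (proper base change), past $\otimes$ (projection formula), and past the flat pullbacks $p_{12}^*,p_{23}^*$ (flat base change) yields $p_*\Ff\star^{cl}p_*\Gg$. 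I would assemble these via \cite{GRVol1}, Lemma 3.2.4 and Proposition 2.2.2 (b), as elsewhere in the paper. Since $p_*$ sends the unit $\Oo_{\Delta}$ to the classical diagonal structure sheaf, it is a unital ring homomorphism, and being bijective, a ring isomorphism.

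The main obstacle I expect is the compatibility of convolution with $p_*$ on the triple products: one must check that the derived fibre product $\St\times^L_{\Nn}\St$ (equivalently $\Nn\times_\g\Nn\times_\g\Nn$) has classical truncation the classical triple product and that the bundle projection there is the ``correct'' one intertwining $p_{12},p_{13},p_{23}$ with their classical analogues — i.e., that no extra derived structure appears on the middle factor that would obstruct base change. This is a finite computation with the Koszul-resolution description of $\Oo_\St$ over $\Oo_{\St^\heart}$, and is morally the same calculation that underlies diagram \eqref{diagram ider monoidal base change}; everything else (exactness of $p_*$, the dévissage isomorphism, equivariance) is standard.
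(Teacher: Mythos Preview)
Your proposal is correct and follows essentially the same outline as the paper's proof: identify $p_*$ with the d\'evissage map \eqref{eq devissage isom}, invoke d\'evissage for the group isomorphism, and then verify compatibility with convolution by commuting the bundle projection through the convolution diagram.

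The one substantive difference is in how the tensor-product step is handled. You propose to commute $p_*$ past $\otimes$ via the projection formula, but as you yourself flag, this is the delicate point: the projection formula applies to expressions of the form $f_*(A\otimes f^*B)$, and $p_{12}^*\Ff\otimes p_{23}^*\Gg$ is not of that shape with respect to the triple bundle projection. The paper resolves exactly this obstacle not by projection formula but by the K\"unneth formula: after writing $p_*([\Ff]\star[\Gg])=p_{13*}^\heart\sum_n(-1)^n\pi_n(p_{12}^*\Ff\otimes p_{23}^*\Gg)$, it invokes K\"unneth to decompose $\pi_n$ of the tensor product as $\sum_{i+j=n}p_{12}^{\heart*}\pi_i(\Ff)\otimes p_{23}^{\heart*}\pi_j(\Gg)$, which matches $p_*([\Ff])\star p_*([\Gg])$ term by term. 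So your ``main obstacle'' is real, and K\"unneth---rather than base change/projection formula---is the tool that dispatches it.
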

\begin{proof}
By the remarks preceding the lemma, it suffices to show that $p_*$ respects convolution in $K$-theory. By definition, we have
\begin{equation}
\label{eq pushforward is ring hom 1}
p_*([\Ff])\star p_*([\Gg])=\sum_{i,j}(-1)^{i+j}\pi_i(\Ff)\star\pi_j(\Gg),
\end{equation}
whereas
\[
p_*([\Ff]\star[\Gg])=p_*p_{13*}(p_{12}^*\Ff\otimes_{q_2^*\Oo_{\St}}p_{23}^*\Gg)=
p_{13*}^\heart p_{3*}(p_{12}^*\Ff\otimes_{q_2^*\Oo_{\St}}p_{23}^*\Gg)
\]
by commutativity of the diagram
\begin{center}
\begin{tikzcd}
\left(\left(\Nn\times_\g\Nn\right)\times_{\Nn\times_\g\Nn}\left(\Nn\times_\g\Nn\times_\g\Nn\right)\right)/\GGm
\arrow[d, "p_3"]
\arrow[r, "p_{13}"]&\St/\GGm\arrow[d, "p_{\St}"]\\
\Nn/\GGm\times_{\g/\GGm}\times_{\g/\GGm}\Nn/\GGm\arrow[r, "p^\heart_{13}"]&\St/\GGm^\heart.
\end{tikzcd}
\end{center}
We have
\begin{equation}
\label{eq pushforward is ring hom 2}
p_{13*}^\heart p_{3*}(p_{12}^*\Ff\otimes_{q_2^*\Oo_{\St}}p_{23}^*\Gg)=
p_{13*}^\heart\sum_{n}(-1)^n\pi_n\left(p_{12}^*\Ff\otimes_{q_2^*\Oo_{\St}}p_{23}^*\Gg\right),
\end{equation}
and so for \eqref{eq pushforward is ring hom 1} to equal \eqref{eq pushforward is ring hom 2}, we need
\[
\pi_n\left(p_{12}^*\Ff\otimes_{q_2^*\Oo_{\St}}p_{23}^*\Gg\right)=\sum_{i+j=n}
p_{12}^{\heart *}\pi_i(\Ff)\otimes_{\Nn\times\Nn\times\Nn}p_{23}^{\heart *}\pi_j(\Gg),
\]
which follows from the K\"{u}nneth formula.
%
%
\end{proof}
In the case of $K_{0}(\J)$, we can define another map to the $K$-theory of the truncation.
Recall $p\colon\BbGGm\to\BbGGm^\heart$ is the bundle projection morphism, and let $i\colon\BbGGm^\heart\to\BbGGm$ be 
the inclusion of the zero section. Then define $\Phi$ to be the composite
\[
\Phi\colon K(\J)\overset{\id\times i^*}{\to}K_0(\BbGGm\times_{\ptGGm}\BbGGm^\heart)\overset{p_*\times\id}{\to}K_G(\BbGGm^\heart\times_{\ptGGm}\BbGGm^\heart).
\]
That is, if $\Ff\boxtimes\Gg\in\J$, then
\[
\Phi([\Ff\boxtimes\Gg])=[p_*\Ff]\boxtimes[i^*\Gg].
\]
\begin{rem}
It is necessary that the source of $(\id\times i)^*$ (which we will show makes sense as a functor) is $\J$ and 
not all of $\Coh_G(\Bb\times\Bb)$; the functor
\[
i^*\colon\QCoh(\BbGGm)\to\QCoh(\BbGGm^\heart)
\]
does not preserve coherence in general.
\end{rem}
\begin{lem}
\label{lem Phi is well-defined}
The morphism $\Phi$ is well-defined and is a surjective morphism of $R(G\times\Gm)$-algebras.
\end{lem}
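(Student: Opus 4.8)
I want to show three things about $\Phi$: it is well-defined (in particular that $(\id\times i)^\ast$ makes sense as a functor out of $\J$ and lands in coherent sheaves), that it is multiplicative for convolution, and that it is surjective. The well-definedness is where the singular-support condition defining $\J$ does its work, so I would start there.

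First I would check that $(\id\times i)^\ast$ preserves coherence on $\J$. Writing an object as (a direct summand of a sum of things of the form) $\Ff_1\boxtimes\Ff_2$, the condition $\Ff\in\J$ forces $\sS(\Ff_2)$ to be proper over $\Sing(\Bb)=\tilde\g$; but since $\Sing(\Bb)\to\Bb^\heart$ is affine (indeed $\tilde\g\to\Bb^\heart$ is a vector bundle), properness of $\sS(\Ff_2)\to\tilde\g$ over the base $\Bb^\heart$ is equivalent to $\sS(\Ff_2)$ being contained in the zero section, i.e.\ $\Ff_2$ is perfect. (This is exactly the dichotomy already recorded after the definition of $\J$, and it is implicitly used in Proposition~\ref{proposition J0 monoidal and ider pullback is monoidal}.) For $\Ff_2$ perfect, $i^\ast\Ff_2$ is a perfect complex on the smooth scheme $\Bb^\heart$, hence coherent; and $i^\ast$ of the first factor is untouched, so $(\id\times i)^\ast\Ff\in\Coh_G(\Bb\times\Bb^\heart)$. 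Then $p_\ast$ along the smooth proper morphism $p\colon\Bb^\heart\to\pt$ (tensored with the identity on the other $\Bb^\heart$) preserves coherence, so $\Phi$ lands in $K_G(\Bb^\heart\times\Bb^\heart)$. Exactness of $i^\ast$ on perfect complexes and of $p_\ast$ gives that $\Phi$ descends to $K$-theory, so it is well-defined; the $R(G\times\Gm)$-linearity is immediate from the projection formula since both $i^\ast$ and $p_\ast$ are $\Rep(G)$-module functors (cf.\ the module structures set up in Section~\ref{subsubsection derived enhancement of the flag variety}).

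Next, multiplicativity. Given $\Ff_1\boxtimes\Ff_2$ and $\Gg_1\boxtimes\Gg_2$ in $\J$, part~(1) of Proposition~\ref{proposition J0 monoidal and ider pullback is monoidal} computes the convolution as $\pair{\Ff_2}{\Gg_1}\cdot(\Ff_1\boxtimes\Gg_2)$, and moreover the displayed computation after Remark~\ref{rem pairing failure} (``$\pair{p^\ast\Gg_g}{\Ff_{f'}} = \dots = \pair{\Gg_g}{\Ff_{f'}}_\heart$'') shows that this pairing agrees with the classical one computed on the truncation, precisely because the second factor is perfect so $\pi_\ast = \pi^\heart_\ast\circ p_\ast$. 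On the other side, $\Phi(\Ff_1\boxtimes\Ff_2) = [p_\ast\Ff_1]\boxtimes[i^\ast\Ff_2]$, and the convolution in $K_G(\Bb^\heart\times\Bb^\heart)$ of $[p_\ast\Ff_1]\boxtimes[i^\ast\Ff_2]$ with $[p_\ast\Gg_1]\boxtimes[i^\ast\Gg_2]$ is $\pair{i^\ast\Ff_2}{p_\ast\Gg_1}_\heart\cdot([p_\ast\Ff_1]\boxtimes[i^\ast\Gg_2])$ by the classical matrix-multiplication description of $K_G(\Bb^\heart\times\Bb^\heart)$ from \cite{CG}. So it all reduces to the identity $\pair{\Ff_2}{\Gg_1} = \pair{i^\ast\Ff_2}{p_\ast\Gg_1}_\heart$ in $R(G)$, which is the same base-change/projection-formula manipulation as in the displayed computation before Borel--Weil--Bott, together with the fact that applying $i^\ast$ to $\Ff_2$ versus $p_\ast$ to $\Gg_1$ are adjoint enough to give the same answer when one side is perfect. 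I would spell this out as a short base-change diagram chase rather than reproving it from scratch.

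Finally, surjectivity. The target $K_G(\Bb^\heart\times\Bb^\heart)$ is spanned over $R(G)$ by classes $\mathcal{L}\boxtimes\mathcal{M}$ with $\mathcal{L},\mathcal{M}$ line bundles on $\Bb^\heart$ (e.g.\ $\Oo(\lambda)\boxtimes\Oo(\mu)$), since $K_G(\Bb^\heart)$ is free over $R(G)$ on line-bundle classes (Steinberg's theorem, as recalled in Section~\ref{subsection the lowest two-sided cell}) and $K_G(\Bb^\heart\times\Bb^\heart)\simeq K_G(\Bb^\heart)\otimes_{R(G)}K_G(\Bb^\heart)$. Such a class is hit: take $\Ff_1$ to be $\mathcal{L}$ pushed forward along the zero section $\Bb^\heart\into\Bb$ (so $p_\ast\Ff_1 = \mathcal{L}$, using $p\circ i = \id$) and $\Ff_2 = p^\ast\mathcal{M}$, which is perfect on $\Bb$ since $\mathcal{M}$ is a vector bundle on smooth $\Bb^\heart$; then $\Ff_1\boxtimes\Ff_2\in\J$ by the perfectness criterion above, and $\Phi([\Ff_1\boxtimes\Ff_2]) = [\mathcal{L}]\boxtimes[i^\ast p^\ast\mathcal{M}] = [\mathcal{L}]\boxtimes[\mathcal{M}]$ since $i^\ast p^\ast = \id$. (These are exactly the building blocks of the $\tee_w$ in Section~\ref{subsubsection the sheaves tee_w}.) Hence $\Phi$ is surjective.

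\textbf{Main obstacle.} The only genuinely delicate point is the coherence claim for $(\id\times i)^\ast$, i.e.\ pinning down that the $\J$-condition on $\sS$ really does force the second tensor factor to be perfect (and that this survives the various reductions, e.g.\ for sheaves not literally of the form $\Ff_1\boxtimes\Ff_2$, where one uses the structure of $\Sing(\Bb\times\Bb) = \tilde\g\times\tilde\g$ and that properness of the projection $\sS(\Ff)\to\tilde\g$ to the first factor, combined with the conical structure, confines $\sS(\Ff)$ to $\tilde\g\times\{0\}$). Everything after that is formal base change and the classical $K$-theory of $\Bb^\heart\times\Bb^\heart$ from \cite{CG}.
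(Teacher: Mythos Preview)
Your multiplicativity and surjectivity arguments match the paper's (it too checks the ring-homomorphism property on classes of box products via the projection formula, and exhibits the same section $[\Ff]\boxtimes[\Gg]\mapsto i_*[\Ff]\boxtimes p^*[\Gg]$). The gap is in well-definedness: the claim in your ``Main obstacle'' paragraph, that properness of the first projection $\sS(\Ff)\to\tilde\g$ together with conicality forces $\sS(\Ff)\subset\tilde\g\times\{0\}$, is false. Take $\Ff=\Oo_{\Delta\Bb}$: its singular support is the diagonal $\Delta\tilde\g$, whose first projection is an isomorphism (hence proper), yet $\Delta\tilde\g\not\subset\tilde\g\times\{0\}$. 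More generally, every sheaf in the image of $i_{\mathrm{der}}^*$ has singular support contained in $\Delta\tilde\g$ by Lemma~\ref{lem Sing(ider) lands in diagonal}, and none of these are box products with perfect second factor. So neither your box-product reduction nor your proposed singular-support bound covers $\J$, and your coherence argument for $(\id\times i)^*$ misses exactly the sheaves (such as $\Oo_{\Delta\Bb}(\lambda)$) that are needed immediately afterward in Lemma~\ref{lem Phi preserves structure sheaf of diagonal} and Proposition~\ref{prop K-theory morphisms conjugate}.

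The paper's argument does not reduce to box products. It computes the singular codifferential of $\id\times i$ and finds $\ker\Sing(\id\times i)=\{0\}\times\tilde\g$. For any $\Ff\in\J$ the intersection $\sS(\Ff)\cap(\{0\}\times\tilde\g)$ is then compact (the first projection is proper and the zero section $\Bb^\heart$ is compact) and conical, hence contained in the zero section of $\tilde\g\times\tilde\g$; now \cite{AG}, Proposition~7.2.2(d), gives coherence of $(\id\times i)^*\Ff$. This is the same ``compact and conical implies zero'' manoeuvre you had in mind, but applied to the intersection of $\sS(\Ff)$ with the kernel of the singular codifferential rather than to $\sS(\Ff)$ itself---and that distinction is exactly what lets the diagonal-type objects through.
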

\begin{proof}
By the K\"unneth formula, we have $\Sing(\Bb\times\Bb^\heart)\simeq\tilde{\g}/\GGm\times\Bb^\heart/\GGm$. Then
\[
\Sing(\id\times i)\colon\tilde{\g}/\GGm\times_{\ptGGm}\tilde{\g}/\GGm\to\tilde{\g}/\GGm\times_{\ptGGm}\Bb^\heart/\GGm
\]
is given by the identity in the first coordinate, and then the zero map in the second coordinate.

Therefore $\Sing(\id\times i)^{-1}(\{0\})=\{0\}\times\tilde{\g}/\GGm$. Now let $\Ff\in\J$. The argument is essentially the 
same as in the proof of Proposition \ref{proposition J0 monoidal and ider pullback is monoidal}.
The set-theoretic intersection
\begin{multline*}
\left(\sS(\Ff)\times_{(\Bb\times\Bb)/\GGm}\BbGGm\times_{\ptGGm}\BbGGm^\heart\right)\cap\Sing(\id\times i)^{-1}(\{0\})
\\
=\sS(\Ff)\cap(\{0\}\times\tilde{\g})/\GGm
\end{multline*}
is contained in the zero-section of $\Sing(\BbGGm\times_{\ptGGm}\BbGGm)$. Indeed, $\Ff\in\J$ and $\Bb^\heart$ is compact,
so the above intersection must be compact. As $\sS(\Ff)$ is conical, we see the intersection must be contained
in the zero-section. We conclude  by \cite{AG}, Corollary 8.4.8 that $(\id\times i)^*$ is well-defined.
Obviously $p_*\times\id$ is well-defined, and hence $\Phi$ is well-defined. As each of $(\id\times i)^*$
and $p_*\times\id$ are $R(G)$-linear (the latter by the projection formula), so is $\Phi$.

Finally, we show that $\Phi$ is a morphism of rings. Using linearity, we compute
\[
\Phi([\Ff_1]\boxtimes[\Gg])\star\Phi([\Gg']\boxtimes[\Ff_2])=p_*[\Ff_1]\boxtimes i_{\mathrm{der}}^*[\Gg]\star p_*[\Gg']\boxtimes i_{\mathrm{der}}^*[\Ff_2]=
\pair{i_{\mathrm{der}}^*[\Gg]}{p_*[\Gg']}_{\Bb^\heart}\cdot p_*[\Ff_1]\boxtimes i_{\mathrm{der}}^*[\Ff_2],
\]
whereas 
\[
\Phi([\Ff_1]\boxtimes[\Gg]\star[\Gg']\boxtimes[\Ff_2])=\pair{[\Gg]}{[\Gg']}_{\Bb}\cdot p_*[\Ff_1]\boxtimes i_{\mathrm{der}}^*[\Ff_2].
\]
Write $\pi\colon\Bb/\GGm^\heart\to\ptGGm$. Then
\[
\pair{i_{\mathrm{der}}^*[\Gg]}{p_*[\Gg']}_{\Bb^\heart}=\pi_*(i_{\mathrm{der}}^*[\Gg]\otimes_{\Oo_{\Bb^\heart}}p_*[\Gg'])
=\pi_*p_*\left(p^*i_{\mathrm{der}}^*[\Gg]\otimes_{\Oo_{\Bb}}[\Gg']\right)=\pair{[\Gg]}{[\Gg']}_{\Bb}
\]
by the formulation of the projection formula in \cite{GRVol1}, Lemma 3.2.4. 
Surjectivity follows as $\Phi$ has a section defined by
$[\Ff]\boxtimes[\Gg]\mapsto i_*[\Ff]\boxtimes p^*[\Gg]$. This completes the proof.
\end{proof}
%
%
\begin{lem}
\label{lem Phi preserves structure sheaf of diagonal}
We have $\Phi(\Oo_{\Delta\Bb}(\lambda))=\Oo_{\Delta\Bb^\heart}(\lambda)$.
\end{lem}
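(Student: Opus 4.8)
The plan is to unwind the definition of $\Phi$ applied to the structure sheaf $\Oo_{\Delta\Bb}(\lambda)$ of the (derived) diagonal, twisted by the line bundle associated to a weight $\lambda$. Recall that $\Phi$ is the composite $(p\times\id)_*\circ(\id\times i)^*$, where $i\colon\Bb^\heart\into\Bb$ is the zero section and $p\colon\Bb\to\Bb^\heart$ the bundle projection. First I would check that $\Oo_{\Delta\Bb}(\lambda)$ indeed lies in $\J$: its Koszul-dual is supported on the diagonal $\Delta\tilde\g\subset\tilde\g\times\tilde\g$ (this is exactly the ``first type'' of object discussed after the definition of $\J$, and also the output of $i_{\mathrm{der}}^*$ by Lemma \ref{lem Sing(ider) lands in diagonal}), so the projection to the first copy of $\Sing(\Bb)$ is an isomorphism onto $\tilde\g$, hence proper. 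So $\Phi$ is defined on this class.

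Next I would compute $(\id\times i)^*\Oo_{\Delta\Bb}(\lambda)$. The key geometric input is the base-change square obtained by intersecting the diagonal $\Delta\Bb\subset\Bb\times\Bb$ with $\Bb\times\Bb^\heart$: the fibre product $\Delta\Bb\times_{\Bb\times\Bb}(\Bb\times\Bb^\heart)$ is the graph of $i$, i.e.\ a copy of $\Bb^\heart$ sitting inside $\Bb\times\Bb^\heart$ as $\{(i(b),b)\}$. Applying base-change (Proposition 2.2.2 (b) of \cite{GRVol1}) to this square, together with the projection formula for the twist, gives $(\id\times i)^*\Oo_{\Delta\Bb}(\lambda)\simeq (\Gamma_i)_*\Oo_{\Bb^\heart}(\lambda)$, where $\Gamma_i\colon\Bb^\heart\to\Bb\times\Bb^\heart$ is the graph of $i$ and $\Oo_{\Bb^\heart}(\lambda)$ is the line bundle on $\Bb^\heart$ pulled back along the second projection. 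One must be slightly careful that the fibre product here is derived, but since $i$ is a closed immersion and the graph of a morphism is always a ``non-derived'' intersection (the diagonal and $\Bb\times\Bb^\heart$ meet transversally in the relevant sense — the second factor is already classical), there is no correction term; this is the one point where I would want to spell out the Tor-independence.

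Finally I would push forward by $(p\times\id)$. Since $p\circ i=\id_{\Bb^\heart}$, the composite $(p\times\id)\circ\Gamma_i\colon\Bb^\heart\to\Bb^\heart\times\Bb^\heart$ is exactly the classical diagonal $\Delta_{\Bb^\heart}$, and by the projection formula the twist by $\lambda$ is preserved. Hence $(p\times\id)_*(\Gamma_i)_*\Oo_{\Bb^\heart}(\lambda)=(\Delta_{\Bb^\heart})_*\Oo_{\Bb^\heart}(\lambda)=\Oo_{\Delta\Bb^\heart}(\lambda)$, which is the claim. I expect the main obstacle to be the identification in the second step — verifying that the derived fibre product computing $(\id\times i)^*$ of the diagonal really is the (classical) graph of $i$ with no higher Tor, and correctly tracking which projection the line-bundle twist lives over through the base-change and projection-formula manipulations; once that is settled the rest is formal bookkeeping of the kind already carried out in Lemma \ref{lem Phi is well-defined}.
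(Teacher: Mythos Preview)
Your argument is correct, and in fact your worry about the derived fibre product is unfounded for a clean categorical reason: for \emph{any} morphism $f\colon Y\to X$, the square
\[
\begin{tikzcd}
Y\arrow[r,"\Gamma_f"]\arrow[d,"f"] & X\times Y\arrow[d,"\id\times f"]\\
X\arrow[r,"\Delta"] & X\times X
\end{tikzcd}
\]
is a (derived) pullback, since $X\times^L_{X\times X}(X\times Y)\simeq X\times^L_X Y$ along $p_2\circ\Delta=\id_X$ and $f$, which is just $Y$. So no Tor-independence check is needed.

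The paper takes a different, more bare-hands route: it simply writes down the local dg-algebra model of $\Oo_{\Delta\Bb}$ as $\C[x,\epsilon]\otimes\C[y,\delta]/(x-y,\epsilon-\delta)$ and observes that applying $(\id\times i)^*$ amounts to setting $\delta=0$, which via the diagonal relation $\epsilon=\delta$ forces $\epsilon=0$ as well; the first factor is then already classical, so $(p\times\id)_*$ does nothing further. Your categorical base-change argument and the paper's local computation encode the same phenomenon (on the diagonal the two odd directions coincide, so killing one kills the other), but your version has the advantage of making the role of the graph and the identity $p\circ i=\id$ transparent, and of handling the twist by $\lambda$ uniformly via functoriality rather than by carrying it through a coordinate computation.
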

\begin{proof}
This is a local computation that amounts to the map 
\[
\C[x,\epsilon]\otimes\C[y,\delta]/(x-y,\epsilon-\delta)\to\C[x,y]
\]
quotienting by $\delta$ and leaving the first factor untouched, where $|x|=|y|=0$ and $|\epsilon|=|\delta|=-1$. One sees that quotienting by $\delta$ also kills $\epsilon$.
\end{proof}
%
%
\begin{prop}
\label{prop K-theory morphisms conjugate}
The following diagram of $R(\Gm)$-algebras 
\begin{center}
\begin{tikzcd}
K_{0}(\St/\GGm)\arrow[rr, "i_{\mathrm{der}}^*"]\arrow[d, "p_{\St*}"]&&K_{0}(\J)\arrow[d, "\Phi"]
\\
K_{0}(\St^\heart/\GGm)\arrow[r]&K_{0}(\Nn/\GGm\times_{\ptGGm}\Nn/\GGm)
\arrow[r, "\bar{\iota}^*\circ\bar{p}_*"]&K_{0}(\Bb^\heart\times\Bb^\heart)/\GGm)
\arrow[d,"\psi_1\otimes\id_{\mathcal{A}}", hook]
\\
&&\Mat_{\# W\times \#W}(R(\tilde{G}\times\Gm))
\\
&&\Mat_{\# W\times \#W}(R(\tilde{G}\times\Gm))\arrow[u, "\Ad(A)"]
\\
\HH\arrow[uuu, "\sim"]\arrow[rr, "\phi_0"]&&J_0\otimes_\Z\mathcal{A}\arrow[u, "\phi_1\otimes\id_{\mathcal{A}}", hook]
\end{tikzcd}
\end{center}
commutes. 
\end{prop}
We will first describe the middle morphism on the $K$-theory of the classical schemes. Consider the diagrams
\begin{equation}
\label{diagram Bernstein subalgebra images classical K-theory}
\begin{tikzcd}
\Nn/\GGm\arrow[d, "\iota_\Delta", hook]\arrow[r, "\pi_\Delta"]&\BbGGm^\heart_\Delta\arrow[dd, "\Delta"]\\
\St/\GGm^\heart\arrow[d, "\bar{p}", hook]\\
\BbGGm^\heart\times_{\ptGGm} \Nn/\GGm \arrow[r, "\id\times\pi"]&\BbGGm\times_{\ptGGm}\BbGGm,
\end{tikzcd}
\end{equation}
which is Cartesian, and the diagram
\begin{equation}
\label{diagram image of C's in classical K-theory}
\begin{tikzcd}
(\Pp^1\times\Pp^1\arrow[d, hook, "\bar{\iota}"])/\GGm&\\
(T^*\Pp^1\times\Pp^1)/\GGm &\St/\GGm\arrow[l, hook, "\bar{p}" above]=(T^*\Pp^1\times_{\tilde{N}^\vee}T^*\Pp^1)/\GGm\\
&(\Pp^1\times\Pp^1)/\GGm\arrow[u, hook, "j"]\arrow[ul, hook, "\bar{\iota}"]
\end{tikzcd}
\end{equation}
which relates to the case $G=\SL_2$. Put
\[
\Oo_\lambda:=[\iota_{\Delta*}\pi_\Delta^*\Oo_{\Bb^\heart}(\lambda)].
\]
\begin{lem}
\label{lem images of classical K-theory Bernstein generators}
We have
\[
\bar{\iota}^*\circ\bar{p}_*(\Oo_\lambda)=\Delta_*\Oo_{\Bb^\heart}(\lambda),
\]
and in the case when $G=\SL_2$, we have
\item 
\[
\bar{\iota}^*\circ\bar{p}_*(-q^{1/2}j_*\Oo(0,-2))=-q^{\frac{1}{2}}\Oo(0,-2)+q^{-\frac{1}{2}}\Oo(0,0).
\]
\end{lem}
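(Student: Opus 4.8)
The plan is to compute both equalities by explicit base-change and a Koszul/Čech resolution on the nose, working with the two Cartesian squares \eqref{diagram Bernstein subalgebra images classical K-theory} and \eqref{diagram image of C's in classical K-theory}. For the first identity, I would start from the definition $\Oo_\lambda=[\iota_{\Delta*}\pi_\Delta^*\Oo_{\Bb^\heart}(\lambda)]$ and push forward along $\bar p$, then pull back along $\bar\iota$. Since the square \eqref{diagram Bernstein subalgebra images classical K-theory} is Cartesian and $\bar p$ is a closed embedding (a section of a vector-bundle projection, hence in particular proper and of finite Tor-dimension against the smooth ambient space), base-change along \eqref{diagram Bernstein subalgebra images classical K-theory} gives $\bar\iota^*\bar p_*\iota_{\Delta*}\pi_\Delta^*\Oo_{\Bb^\heart}(\lambda)\simeq \Delta_*\,\mathrm{id}^*\pi^*\Oo_{\Bb^\heart}(\lambda)=\Delta_*\Oo_{\Bb^\heart}(\lambda)$, once one checks that the diagonal $\Bb^\heart\hookrightarrow\Bb^\heart\times\Bb^\heart$ is the pullback of $\Nn\hookrightarrow\St^\heart$ — which is exactly the content of the Cartesian diagram as drawn. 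The only subtlety is making sure there is no Tor-correction term, i.e.\ that the fibre product is already classical / the derived structure is trivial here; this is where I would invoke that both $\iota_\Delta$ and $\Delta$ are regular embeddings meeting the relevant subvarieties transversally enough, equivalently that the excess bundle vanishes.

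For the second identity (the $\SL_2$ case), the object $-q^{1/2}j_*\Oo(0,-2)$ is the class (under the Chriss--Ginzburg isomorphism $K_{G\times\Gm}(\St)\simeq H$, normalized as in \cite{CG} Theorem 7.2.5) corresponding to a Kazhdan--Lusztig-type generator $C_{s_0}$ or an element $\theta C$ built from it; the key point is that $j\colon\Pp^1\times\Pp^1\hookrightarrow\St$ is the inclusion of the zero-section of the bundle $T^*\Pp^1\times_{\tilde N^\vee}T^*\Pp^1\to\Pp^1\times\Pp^1$ appearing in \eqref{diagram image of C's in classical K-theory}. I would resolve $j_*\Oo(0,-2)$ by a Koszul complex for the conormal bundle of the zero-section — for $\St$ over $\Pp^1\times\Pp^1$ the relevant "fibre direction" is one-dimensional (the cotangent line of a $\Pp^1$, since $T^*\Pp^1\times_{\tilde N^\vee}T^*\Pp^1$ over the non-diagonal locus has $1$-dimensional fibres) — so that $j_*\Oo(0,-2)$ has a two-term resolution by line bundles pulled back from $\Pp^1\times\Pp^1$, twisted by the appropriate power of the tautological character $q$. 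Pushing this two-term complex forward along $\bar p$ and pulling back along $\bar\iota$ (both maps being transparent on $\Pp^1\times\Pp^1$-linear line bundles) then produces the alternating sum $-q^{1/2}\Oo(0,-2)+q^{-1/2}\Oo(0,0)$ after bookkeeping the Serre twist $\Oo_{\Pp^1}(-2)=\omega_{\Pp^1}$ and the shift by the $\Gm$-weight of the cotangent fibre.

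The main obstacle — and the step I would spend the most care on — is pinning down all the normalizations so that the powers of $q^{1/2}$ come out exactly as stated: which line bundle the Chriss--Ginzburg convention attaches to the zero-section class, what the $\Gm$-weight of the cotangent fibre of $\Pp^1$ is in their conventions, and the direction of the twist by $\omega_{\Pp^1}=\Oo(-2)$. These are precisely the conventions the paper flags as needing "significant space"; concretely I would fix them by testing against the already-established first identity $\bar\iota^*\bar p_*(\Oo_\lambda)=\Delta_*\Oo_{\Bb^\heart}(\lambda)$ and against the known image of $C_{s_0}$ in $H$, and then the second formula is forced. Everything else — the base-change along \eqref{diagram Bernstein subalgebra images classical K-theory}, the Koszul resolution of the zero-section, and the projection-formula manipulations for $\bar\iota^*$ and $\bar p_*$ — is routine and uses only the Cartesian diagrams above together with standard properties of proper pushforward and flat (here: regular-embedding) pullback in equivariant $K$-theory.
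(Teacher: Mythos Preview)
For the first identity your approach is essentially the paper's: both come down to base-change around the Cartesian square \eqref{diagram Bernstein subalgebra images classical K-theory}. The paper packages the step you call ``no Tor-correction'' as the Thom isomorphism $\bar\iota^*=((\id\times\pi)^*)^{-1}$ rather than checking transversality directly, but the content is the same.

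For the second identity there is a real gap. You propose to Koszul-resolve $j_*\Oo(0,-2)$ on $\St$, treating $j\colon\Pp^1\times\Pp^1\hookrightarrow\St$ as the zero section of a line bundle. But $\St\to\Pp^1\times\Pp^1$ is not a vector bundle: its fibre over $(\bB_1,\bB_2)$ is $\nN_1\cap\nN_2$, which is one-dimensional over the diagonal and zero-dimensional off it (you have these reversed). Thus $\St$ is reducible, $j$ is not a regular embedding, and no Koszul complex for $j$ is available on $\St$. The paper sidesteps this completely by reading off from diagram \eqref{diagram image of C's in classical K-theory} that $\bar p\circ j=\bar\iota$, hence $\bar p_*j_*=\bar\iota_*$ on the nose. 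One is then left with computing $\bar\iota^*\bar\iota_*\Oo(0,-2)$ on the \emph{smooth} variety $T^*\Pp^1\times\Pp^1$, where the self-intersection formula (\cite{CG}, Lemma~5.4.9) gives tensoring with the Koszul class $[\Oo(0,0)]-q^{-1}[\Oo(0,2)]$ of the normal bundle, and one line of arithmetic finishes. This \emph{is} the Koszul computation you want, but it must be carried out after pushing forward to a smooth ambient space, not on $\St$ itself.
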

\begin{proof}
By diagram \eqref{diagram image of C's in classical K-theory}
\[
\bar{\iota}^*\circ\bar{p}_*(-q^{1/2}j_*\Oo(0,-2))=-q^{\frac{1}{2}}\bar{\iota}^*\bar{\iota}_*\Oo(0,-2)=-q^{\frac{1}{2}}\lambda\otimes\Oo(0,-2)
\]
by \cite{CG}, Lemma 5.4.9, where $\lambda=[\Sym(\Pp^1\times T\Pp^1[1])]=[\Oo(0,0)]-q^{-1}[\Oo(0,2)]$. Here
we have multiplied $\Oo(0,2)$ by the character $q^{-1}$, giving its fibres the trivial $\C^\times$-action, which restores equivariance of the complex defining the class $\lambda$. (When confronted with a linear map $V\to W$ where $W$ has trivial $\C^\times$-action and $V$ is scaled by a character, one restores equivariance by tensoring $V$ with the inverse character.) Thus we have 
\[
\bar{\iota}^*\circ\bar{p}_*(-q^{1/2}j_*\Oo(0,-2))=-q^{\frac{1}{2}}\left(\Oo(0,0)-q^{-1}\Oo(0,2)\right)\otimes\Oo(0,-2)=
-q^{\frac{1}{2}}\Oo(0,-2)+q^{-\frac{1}{2}}\Oo(0,0).
\]
To prove the second formula, apply base-change diagram \eqref{diagram Bernstein subalgebra images classical K-theory} and use that, 
according to the Thom isomorphism theorem, $((\id\times\pi)^*)^{-1}=\bar{\iota}^*$. Then we
have
\[
\bar{\iota}^*\circ\bar{p}_*(\Oo_\lambda)=\bar{\iota}^*(\bar{p}\circ\iota_\Delta)_*\pi_\Delta^*\Oo(\lambda).
\]
By base-change, we have $(\id\times\pi)^*\Delta_*=(\bar{p}\circ\iota_\Delta)_*\pi_\Delta^*$,
hence $\Delta_*=((\id\times\pi)^*)^{-1}(\bar{p}\circ\iota_\Delta)_*\pi_\Delta^*$. 
\end{proof}
\begin{proof}[Proof of Proposition \ref{prop K-theory morphisms conjugate}]
That the bottom square commutes is the combination of the main results of \cite{XiIII} and \cite{Nie}.

By Proposition \ref{proposition J0 monoidal and ider pullback is monoidal}, the morphism $i_{\mathrm{der}}^*$ induces a morphism on $K$-theory as above. Hence 
by Lemma \ref{lem Phi is well-defined} and the above discussion, all the morphisms in the diagram are well-defined morphisms of algebras.

We first show the diagram commutes on the Bernstein subalgebra. 
Recalling from Section \ref{subsection Cat of Haff, Bez equiv} that the diagonal 
component of the Steinberg variety is a 
classical rather than a derived scheme, and so $\Oo_\lambda$ is 
naturally an element of $K_{0}(\St/\GGm)$ for which
$p_{\St*}\Oo_\lambda=\Oo_\lambda$ with the right-hand side regarded 
as an object of $K_{0}(\St^\heart/\GGm)$. Then by Lemma \ref{lem images of classical K-theory Bernstein generators}, it suffices to show that $\Phi([i^*\Oo_\lambda])=[\Delta_*\Oo_{\Bb^\heart}(\lambda)]$.
Indeed, though, we have
\[
i^*(\Oo_\lambda)=[\Oo_{\Delta\Bb}(\lambda)],
\]
as the structure sheaf of the diagonal pulls back to the structure sheaf 
of the diagonal. By Lemma \ref{lem Phi preserves structure sheaf of diagonal} we have 
$\Phi([\Oo_{\Delta\Bb}])=[\Oo_{\Delta\Bb^\heart}]$, and 
likewise for the twists. Thus the diagram commutes for the Bernstein subalgebra.

Consider the diagram
\begin{equation}
\begin{tikzcd}
\left(\Bb^\heart\times\Bb^\heart\times\Spec\left(\Sym\left(\g[1]\right)\right)\right)/\GGm\arrow[d, "r"]\arrow[r]&\left(\Bb^\heart\times\Bb^\heart\right)/\GGm\arrow[d, "j"]\\
(\Bb\times\Bb)/\GGm\arrow[d]\arrow[r, "i_{\mathrm{der}}"]&\St/\GGm\arrow[d]\\
\pt/\GGm\arrow[r]&\g/\GGm.
\end{tikzcd}
\end{equation}
Note that, by extending \eqref{diagram fibre product description of B x B}, the bottom square is Cartesian. Moreover, we have 
%
%
\begin{align*}
(\Bb^\heart\times\Bb^\heart)/\GGm\times_{\g/\GGm}\ptGGm &=
(\Bb^\heart\times\Bb^\heart)/\GGm\times_{\ptGGm}\ptGGm\times_{\g/\GGm}\pt/\GGm
\\
&=
(\Bb^\heart\times\Bb^\heart)/\GGm\times_{\ptGGm}\Spec\left(\Sym\left(\g[1]\right)\right)/\GGm
\end{align*}
where we identify $\g$ with $\g^*$ via the Killing form. Therefore the large square is also Cartesian,
and so the upper square is also Cartesian. 

We will use this to explain how to compute 
$i^*_{\mathrm{der}}j_*\Ff$  for any coherent sheaf $\Ff$ on $\Bb^\heart\times\Bb^\heart$.
Pullback by the top map sends $\Ff\mapsto\Ff\otimes\Sym(\g[1])$. By 
\eqref{eqn B defined by short exact sequence of vector bundles}, the map $p$ is given by the identity
on classical truncations, together with the map of cdgas which is pointwise the obvious map 
$\Sym(\bB[1])\to\Sym(\g[1])$ given by including a Borel subalgebra $\bB\into\g$. Pushforward by $r$
corresponds pointwise to equipping the module structure given by 
\[
\Sym(\bB_1[1])\otimes\Sym(\bB_2[1])\to\Sym(\g[1]).
\]
Applying the functor $(p\times\id)_*$ forgets the $\Sym[\bB_1[1])$-action, and then applying
$(\id\times i)^*$ quotients by the remaining $\Sym[\bB_2[1])$-action. That is, fibrewise we have
\[
\C\otimes_{\Sym(\bB_2[1])}\Sym(\g[1])\otimes\Ff_{\bB_1,\bB_2}=\Sym(\g/\bB_2[1])\otimes\Ff
=\Sym(\nN_2^*[1])\otimes\Ff_{\bB_1,\bB_2},
\]
where $\nN_2$ is the radical of $\bB_2$. 
Comparing with Lemma \ref{lem images of classical K-theory Bernstein generators} and the first
sentence of its proof, this says precisely that the diagram commutes when $G=\SL_2$.

Returning to general $G$, let $s$ be a simple reflection. 
Let $\bar{Y}_s\subset\Bb^\heart\times\Bb^\heart$ be the closure of the $G$-orbit labelled by $s$,
and let 
\[
\pi_s\colon T_{\bar{Y}_s}^*(\Bb^\heart\times\Bb^\heart)\to\bar{Y}_s
\]
be the conormal bundle. Put $\mathcal{Q}_s=\pi_s^*\Omega^1_{\bar{Y}_s/\Bb^\heart}$. Then by equation 
7.6.34 in \cite{CG}, it suffices to show that 
\[
\Phi\left(i^*i_{\St*}\mathcal{Q}_s\right)=\Oo_{\Bb^\heart}\boxtimes i_{X_s*}(q[\Oo(2)]-[\Oo]),
\]
where $i_{X_s}$ is the inclusion of the Schubert variety $X_s\simeq\Pp^1$ into $\Bb^\heart$. That is, 
the image of $\mathcal{Q}_s$ is just the pushforward of the answer in the $\SL_2$ case. But it is clear 
that this is indeed the case.

\end{proof}
We have now nearly proved
\begin{prop}
\label{prop multiplication of sheaves tee_w}
For $G=\SL_2$ or $\SL_3$, there exists a family of objects 
$\{\tee_w\}_{w\in\mathbf{c}_0}$ in $\J$, such that that if $t_wt_x=\sum_{z}\gamma_{w,x,z^{-1}}t_z$ in $J_0$, 
then
\[
\tee_w\star\tee_x=\bigoplus_z\tee_z^{\oplus\gamma_{w,x,z^{-1}}}
\]
in $\J$ and such that $\Phi([\tee_w])=[t_w]$.
\end{prop}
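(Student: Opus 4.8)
The plan is to combine the two halves of the desired identity, each of which is essentially already in hand. Recall that $\tee_w = \Ff_f \boxtimes p^*\Gg_g$ (twisted by $V(\chi)$ when $w = f w_0 \chi g^{-1}$). First I would verify the multiplication formula
\[
\tee_w \star \tee_x = \bigoplus_z \tee_z^{\oplus \gamma_{w,x,z^{-1}}}
\]
directly in $\J$. By the computation at the end of Section \ref{subsubsection the sheaves tee_w}, for $w = f w_0 \chi g^{-1}$ and $x = f' w_0 \nu (g')^{-1}$ Proposition \ref{proposition J0 monoidal and ider pullback is monoidal}(1) gives $\Ff_f \boxtimes p^*\Gg_g \star \Ff_{f'} \boxtimes p^*\Gg_{g'} = \pair{\Gg_g}{\Ff_{f'}} \Ff_f \boxtimes p^*\Gg_{g'}$, which by Borel--Weil--Bott and the fact (from Lemma \ref{thm dual basis in K-theory}) that $\Gg_g$ is dual to $\Ff_{g}$ is $\Ff_f \boxtimes p^*\Gg_{g'}$ when $g = f'$ and $0$ otherwise; then Proposition \ref{proposition J0 monoidal and ider pullback is monoidal}(2) handles the twisting characters, producing $\bigoplus_\mu V(\mu)^{\oplus m^\mu_{\chi,\nu}}(\Ff_f \boxtimes p^*\Gg_{g'})$. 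This is exactly the categorical shadow of the multiplication rule $t_{f w_0 \chi g^{-1}} t_{g w_0 \nu (g')^{-1}} = \sum_\mu m^\mu_{\chi,\nu}\, t_{f w_0 \mu (g')^{-1}}$ recalled in Section \ref{subsection the lowest two-sided cell}, so the first assertion follows once one matches indices $z = f w_0 \mu (g')^{-1}$ with $\gamma_{w,x,z^{-1}} = m^\mu_{\chi,\nu}$.

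Second I would identify $\Phi([\tee_w])$ with $[t_w] \in K_G(\Bb^\heart \times \Bb^\heart) \cong J_0$. By definition $\Phi([\Ff_f \boxtimes p^*\Gg_g]) = [p_* i_* \Ff_f] \boxtimes [i^* p^* \Gg_g] = [\Ff_f] \boxtimes [\Gg_g]$ in $K_G(\Bb^\heart \times \Bb^\heart)$, since $p_* i_* = \id$ on $K$-theory (by the dévissage map \eqref{eq devissage isom}) and $i^* p^* = \id$. Under Nie's isomorphism $\sigma\colon J_0 \xrightarrow{\sim} K_G(\Bb^\heart \times \Bb^\heart)$, and using the description of $\cc_0$ via Steinberg's basis elements $x_w$, the class $[t_{f w_0 g^{-1}}]$ corresponds to $[\Ff_f] \otimes [\Gg_g]$ where $\Ff_f = \Oo_{\Bb^\heart}(x_f)$ is Steinberg's basis vector and $\Gg_g = \Oo(y_g)[\ell(g)]$ is the dual basis vector of Lemma \ref{thm dual basis in K-theory}: this is precisely the statement that Xi's/Nie's parametrization of $t_w$ in $K$-theory is through the (Steinberg basis) $\times$ (dual basis) matrix units. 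The twist by $V(\chi)$ matches the $\chi$ in $w = f w_0 \chi g^{-1}$ on both sides. So $\Phi([\tee_w]) = [t_w]$, and compatibility with the ring structure is already guaranteed by Lemma \ref{lem Phi is well-defined}.

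The main obstacle is the second step, and specifically the input from Lemma \ref{thm dual basis in K-theory}: the identity $\Phi([\tee_w]) = [t_w]$ rests on $\{\Gg_g\}$ being honestly dual to Steinberg's basis under the Chriss--Ginzburg--Kazhdan--Lusztig pairing, and as Remark \ref{rem pairing failure} explains this is only known for $\tilde G = \SL_2, \SL_3$ — for $\SL_4$ and beyond the naive classes $[\Gg_g]$ fail to pair correctly, governed by singular Schubert varieties. Hence the restriction to $\SL_2, \SL_3$ in the hypothesis. Granting this, one must still bookkeep the identification of $K_G(\Bb^\heart \times \Bb^\heart)$ with $J_0$ compatibly with the isomorphism of Proposition \ref{prop K-theory morphisms conjugate} (equivalently Lemma \ref{lem CG map and Lusztig map conjugate}), i.e.\ checking that the matrix-unit indexing used there and the $f, g$ indexing of $\cc_0$ agree up to the change-of-basis $\Ad(A)$; this is a finite check in the two cases at hand and presents no conceptual difficulty, but it is where the ``nearly'' in ``we have now nearly proved'' gets discharged.
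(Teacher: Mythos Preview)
Your proposal is correct and follows essentially the same approach as the paper's own proof. The paper's proof is extremely terse: it simply says that the multiplication formula is the content of Section~\ref{subsubsection the sheaves tee_w}, then computes $\Phi([\tee_w])=[\Ff_f]\boxtimes i^*p^*[\Gg_g]=[\Ff_f]\boxtimes[\Gg_g]$ directly from the definition of $\Phi$, and concludes by linearity over $K^G(\pt)$ and the parametrization of $\cc_0$. You reproduce exactly this, with considerably more detail filled in.

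One small remark: your final paragraph about needing to ``bookkeep'' the identification of $K_G(\Bb^\heart\times\Bb^\heart)$ with $J_0$ up to $\Ad(A)$ is more caution than the paper exercises. The paper takes Nie's isomorphism $\sigma\colon J_0\xrightarrow{\sim}K_G(\Bb^\heart\times\Bb^\heart)$ as the identification, under which $t_{fw_0\chi g^{-1}}\mapsto V(\chi)\cdot([\Ff_f]\boxtimes[\Gg_g])$ by construction; the matrix $A$ in Lemma~\ref{lem CG map and Lusztig map conjugate} concerns the further embedding into $\Mat_{\#W}(R(\tilde G))$, which is not needed here. So the ``nearly'' is discharged purely by the $\Phi$ computation, not by any additional finite check.
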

\begin{proof}
The discussion in Section \ref{subsubsection the sheaves tee_w} proves all but the last statement
of the proposition. Finally, by Proposition \ref{prop K-theory morphisms conjugate}, if $w=fw_0 g^{-1}$ we have
\[
\Phi([\tee_w])=\Phi([\Ff_f]\boxtimes[p^*\Gg_g])=[\Ff_f]\boxtimes i^*p^*[\Gg_g]=[\Ff_f]\boxtimes[\Gg_g].
\]
The general claim follows by linearity over $K_0(\ptG)$ and 
the parametrization in Section \ref{subsubsection the sheaves tee_w}.
\end{proof}
%
%
%
%
%
%
%
%
%
\subsection{The Schwartz space of the basic affine space}
\label{subsection The Schwartz space of the basic affine space}
Let $F$ be a non-archimedean local field and $G^\vee$ be a reductive 
group defined over $F$ dual to $G=G$.
The Schwartz space of the basic affine space $\mathcal{S}$ was defined by Braverman-Kazhdan in \cite{BKBasicAffine} to organize the principal series representations of $G^\vee(F)$ in a way insensitive to the 
poles of intertwining operators.
In \cite{BK}, Braverman and Kazhdan gave the following description of $J_0$ in terms of the Iwahori-invariants 
$\mathcal{S}^I$ of $\mathcal{S}$. In \textit{loc. cit.} it was shown 
that $\mathcal{S}^I$ is isomorphic to $K_{0}(\Bb/T\times\Gm^\heart)$ as an $H\otimes\C[\Waff]$-module,
and in \cite{BK}, it was proven that $J_0\simeq\End_{\Waff}(\mathcal{S}^I)$, where
the action of $\Waff$ is as defined in \textit{loc. cit.}
\begin{ex}
Let $G^\vee=\SL_2$, with $\Waff=W\ltimes X_*=\genrel{s_0,s_1}{s_0^2=s_1^2=1}$, where $X_*$ is the cocharacter lattice of $\G^\vee$, and $s_0$ is the finite 
simple reflection. Then we have that $K_{0}(\Pp^1/T\times\Gm)$ has basis $\{[\Oo_{\Pp^1}], [\Oo_{\Pp^1}(-1)]\}$, and we have $t_{s_0}=[\Oo_{\Pp^1}]\boxtimes[\Oo_{\Pp^1}]$ and
$t_{s_1}=[\Oo(-1)]\boxtimes[\Oo(-1)][1]$ under the identification in Lemma
\ref{lem CG map and Lusztig map conjugate}. The basis elements corresponding to 
the two distinguished involutions in $\cc_0$ act by projectors, with $t_{s_0}$
preserving $\Oo_{\Pp^1}$ and killing $\Oo_{\Pp^1}(-1)$, and vice-versa for $t_{s_1}$.
\end{ex}
Recalling that $K_{0}(\BbGGm^\heart\times_{\ptGGm}\BbGGm^\heart)\simeq K_{0}(\Bb/T\times\Gm^\heart)$, we see that
we have two natural coherent categorifications of $\mathcal{S}^I$, and that $\J$ acts on both of them:
\begin{prop}
\label{prop cat J acts on B B and Schwartz}
The category $\J$ acts on $\Coh(\BbGGm\times_{\ptGGm}\BbGGm)$ and on $\Coh(\Bb/T\times\Gm^\heart)$.
\end{prop}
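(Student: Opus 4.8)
The plan is to construct both actions as instances of the same convolution formalism already used to define the monoidal structure on $\J$, checking in each case that the relevant singular-support condition guarantees coherence. For the action of $\J$ on $\Coh_{G\times\Gm}(\Bb\times\Bb)$, I would set $\Ff\cdot\Gg := p_{13*}(p_{12}^*\Ff\otimes p_{23}^*\Gg)$ for $\Ff\in\J$ and $\Gg\in\Coh_{G\times\Gm}(\Bb\times\Bb)$, where the $p_{ij}$ are the projections from $\Bb\times\Bb\times\Bb$ onto the indicated factors; associativity and the module axioms are then formal consequences of base-change exactly as in the proof that $\J$ is monoidal. The one substantive point is coherence of the output: here one uses that $\Ff\in\J$ means the projection $\sS(\Ff)\to\Sing(\Bb)$ onto the \emph{first} factor is proper, and repeats verbatim the singular-support intersection estimate from the proof of Proposition \ref{proposition J0 monoidal and ider pullback is monoidal} — the intersection $\sS(\Ff_{12}\boxtimes\Oo_\Bb)\cap\sS(\Oo_\Bb\boxtimes\Gg_{23})$ sits inside $\{0\}\times V\times V$, properness of the first projection from $\sS(\Ff)$ forces it into the zero section in the first coordinate, and then \cite{AG} Proposition 7.1.3 (b) controls $\sS$ of the pushforward. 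Note that no condition on $\sS(\Gg)$ is needed, so the action lands in all of $\Coh_{G\times\Gm}(\Bb\times\Bb)$, not just $\J$.

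For the action on $\Coh_{T\times\Gm}(\Bb)$ I would first use the equivalence $\Coh_{T\times\Gm}(\Bb)\simeq\Coh_{G\times\Gm}(\Bb\times\Bb^\heart/{\sim})$ implicit in $\Ind_B^G$, or more directly work with the realization $\Coh_{T\times\Gm}(\Bb)\simeq\Coh_{G\times\Gm}(\Bb\times_{\Bb^\heart}\tilde{\g}\text{-data})$; concretely, the cleanest route is to note that a sheaf on $\Bb$ together with its $T$-equivariance is the same as a $G$-equivariant sheaf on $\Bb\times\Bb$ whose second factor is ``trivialized,'' i.e.\ to use that $\Bb^\heart = G/B$ so that $\Coh_{T}(\Bb)\simeq\Coh_{G}(G\times_B\Bb)\simeq\Coh_G(\Bb^\heart\times_{\mathrm{pt}}^{?}\Bb)$ — the upshot being that $\Coh_{T\times\Gm}(\Bb)$ is a summand-type quotient/section of $\Coh_{G\times\Gm}(\Bb\times\Bb)$ on which the convolution above restricts. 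The action is then $\Ff\cdot\Gg := q_*(r_1^*\Ff\otimes r_2^*\Gg)$ with $q,r_1,r_2$ the evident projections from $\Bb\times_{\Bb^\heart}(\text{appropriate product})$; coherence again follows from the properness of $\sS(\Ff)\to\Sing(\Bb)$ in the first factor, since the second ``variable'' now contributes no singular-support obstruction at all (there is no condition on $\Coh_{T\times\Gm}(\Bb)$), precisely as flagged in the remark following Section \ref{subsubsection the sheaves tee_w}. The module axioms are once more formal via base-change along the triple-product diagrams, the relevant base-change and projection-formula inputs being Proposition 2.2.2 (b) and Lemma 3.2.4 of \cite{GRVol1}.

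The main obstacle I anticipate is purely bookkeeping rather than conceptual: pinning down the correct derived fibre-product presentation of the space carrying $\Coh_{T\times\Gm}(\Bb)$ so that the convolution is literally a special case of the $\Bb\times\Bb$ convolution and so that the singular-support computation transports without change. Once the diagram analogous to \eqref{diagram fibre product description of B x B} is written down — expressing the triple product $\Bb\times\Bb\times_{\Bb^\heart}(\cdots)$ as the correct iterated derived fibre product over $\g$ — the coherence estimate and the verification of the module axioms are the same arguments already carried out, with the $\Gg$-side singular support automatically in the zero section or otherwise harmless because the target category imposes no support condition. I would therefore present this as two short paragraphs: one recalling the convolution and its coherence via the proof of Proposition \ref{proposition J0 monoidal and ider pullback is monoidal}, and one setting up the fibre-product description of $\Coh_{T\times\Gm}(\Bb)$ and observing that the identical estimate applies.
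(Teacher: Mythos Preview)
Your treatment of the action on $\Coh_{G\times\Gm}(\Bb\times\Bb)$ is exactly the paper's argument: the paper simply observes that in the proof of Proposition~\ref{proposition J0 monoidal and ider pullback is monoidal}, the coherence of $\Ff\star\Gg$ used only that $\Ff\in\J$ and nothing about $\sS(\Gg)$, which is precisely what you say.

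For $\Coh_{T\times\Gm}(\Bb)$ you diverge from the paper. The paper's proof is the single phrase ``entirely similar,'' meaning: define the convolution action of $\J$ on $\Coh_{T\times\Gm}(\Bb)$ directly (restrict equivariance and convolve via the projections $\Bb\times\Bb\to\Bb$), then rerun the identical singular-support estimate, which again only consumes the $\J$-condition on the first factor. You instead try to realize $\Coh_{T\times\Gm}(\Bb)$ as a piece of $\Coh_{G\times\Gm}(\Bb\times\Bb)$ via various induction-type equivalences and then invoke the first action. This is not wrong in spirit, but you never pin down a correct equivalence (the candidates you list are vague or not quite right; the honest one is $\Coh_G(\Bb^\heart\times\Bb)\simeq\Coh_B(\Bb)$, and one still has to pass from $B$ to $T$), and you yourself flag the resulting bookkeeping as the main obstacle. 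The direct route the paper takes avoids all of this: there is no need for any fibre-product gymnastics or induction equivalence, because the coherence argument is literally the same two lines as in Proposition~\ref{proposition J0 monoidal and ider pullback is monoidal} with one fewer factor of $\Bb$. Dropping the attempted reduction and just writing the convolution formula on $\Bb\times\Bb$ acting on $\Bb$ would make your proof both shorter and complete.
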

\begin{proof}
This is a porism of Proposition \ref{proposition J0 monoidal and ider pullback is monoidal}; the 
proof that $\Ff\star\Gg$ is coherent if $\Ff,\Gg\in\J$ used nothing about $\sS(\Gg)$. 
The proof for $\Coh(\Bb/T\times\Gm^\heart)$ is entirely similar.
\end{proof}

\bibliography{J0_cat_paper_2021_biblio.bib}

\end{document}